      \theoremstyle{plain}
      \newtheorem{theorem}{Theorem}[section]
      \theoremstyle{definition}
      \theoremstyle{remark}
      \newtheorem{remark}[theorem]{Remark}
      \newtheorem{example}[theorem]{Example}
      \theoremstyle{proposition}
      \theoremstyle{ass}
      \newtheorem{ass}[theorem]{Assumption}
      \def\@setcopyright{}
      \def\serieslogo@{}
\newcommand{\R}{\mathbb{R}}
\renewcommand{\d}{\delta}
\renewcommand{\a}{\alpha}
\theoremstyle{plain}
\begin{document}

\author{Peter Eichelsbacher}
\address{Fakult\"at f\"ur Mathematik, Ruhr-Universit\"at Bochum, NA 3/66, 44780 Bochum, Germany}
\email{peter.eichelsbacher@rub.de}

\author{ Matthias L\"owe}
\address{Westf\"alische Wilhelms-Universit\"at M\"unster, Fachbereich Mathematik, Einsteinstra\ss e 62, 48149 M\"unster, Germany}
\email{maloewe@math.uni-muenster.de}

\title[Moderate deviations via Lindeberg's method]{\large Lindeberg's method for moderate deviations and random summation}

\begin{abstract}
We apply Lindeberg's method, invented to prove a central limit theorem, to analyze the moderate deviations around such a central limit theorem. In particular, we will show moderate deviation principles for martingales as well as for random sums, in the latter situation both, in the case when the limit distribution is Gaussian or non-Gaussian. Moreover in the Gaussian case we show moderate deviations for random sums using bounds on cumulants, alternatively.
Finally, we also prove a large deviation principle for certain random sums.
\end{abstract}

% AMS subject classifications (used in AMS journals)
%\subjclass[2010]{Primary 60F10; Secondary}

% AMS keywords (used in AMS journals)
\keywords{random sums, moderate and large deviations, Lindeberg's method}

% acknowledge support, etc
\thanks{This research was done partly at the Mathematisches Forschungsinstitut Oberwolfach during a stay within the Research in Pairs Programme.}

% today's date, or fill in whatever date you prefer
%\date{\today}

\maketitle

\newcommand{\Ncal}{{\mathcal {N}}}
\newcommand{\Mcal}{{\mathcal {M}}}
\newcommand{\Ocal}{{\mathcal {O}}}
\newcommand{\Dcal}{{\mathcal {D}}}
\newcommand{\Tcal}{{\mathcal {T}}}
\newcommand{\Hcal}{{\mathcal {H}}}

% MATHEMATICAL SYMBOLS:

\def\rr{\mathbb R}
\def\mm{\mathbb M}
\def\nn{\mathbb N}
\def\cc{\mathbb C}
\def\torus{\mathbb T}
\def\zz{\mathbb Z}
\def\qq{\mathbb Q}
\def\pp{\mathbb P}
\def\kk{\mathbb K}
\def\erw{\mathbb E}

\def\phi{\varphi }

\def\calf{{\mathcal F}}
\def\cala{{\mathcal A}}
\def\calb{{\mathcal B}}
\def\calc{{\mathcal C}}
\def\cald{{\mathcal D}}
\def\cale{{\mathcal E}}
\def\calg{{\mathcal G}}
\def\calh{{\mathcal H}}
\def\cali{{\mathcal I}}
\def\calj{{\mathcal J}}
\def\calk{{\mathcal K}}
\def\call{{\mathcal L}}
\def\calm{{\mathcal M}}
\def\caln{{\mathcal N}}
\def\calo{{\mathcal O}}
\def\calp{{\mathcal P}}
\def\calq{{\mathcal Q}}
\def\calr{{\mathcal R}}
\def\cals{{\mathcal S}}
\def\calt{{\mathcal T}}
\def\calu{{\mathcal U}}
\def\calv{{\mathcal V}}
\def\calw{{\mathcal W}}
\def\calx{{\mathcal X}}
\def\caly{{\mathcal Y}}
\def\calz{{\mathcal Z}}
\def\cala{{\mathcal A}}

\def\frA{{\mathfrak A}}
\def\frB{{\mathfrak B}}
\def\frC{{\mathfrak C}}
\def\frD{{\mathfrak D}}
\def\frE{{\mathfrak E}}
\def\frF{{\mathfrak F}}
\def\frG{{\mathfrak G}}
\def\frH{{\mathfrak H}}
\def\frI{{\mathfrak I}}
\def\frJ{{\mathfrak J}}
\def\frK{{\mathfrak K}}
\def\frL{{\mathfrak L}}
\def\frM{{\mathfrak M}}
\def\frN{{\mathfrak N}}
\def\frO{{\mathfrak O}}
\def\frP{{\mathfrak P}}
\def\frQ{{\mathfrak Q}}
\def\frR{{\mathfrak R}}
\def\frS{{\mathfrak S}}
\def\frT{{\mathfrak T}}
\def\frU{{\mathfrak U}}
\def\frV{{\mathfrak V}}
\def\frW{{\mathfrak W}}
\def\frX{{\mathfrak X}}
\def\frY{{\mathfrak Y}}
\def\frZ{{\mathfrak Z}}

\def\fra{{\mathfrak a}}
\def\frb{{\mathfrak b}}
\def\frc{{\mathfrak c}}
\def\frd{{\mathfrak d}}
\def\fre{{\mathfrak e}}
\def\frf{{\mathfrak f}}
\def\frg{{\mathfrak g}}
\def\frh{{\mathfrak h}}
\def\fri{{\mathfrak i}}
\def\frj{{\mathfrak j}}
\def\frk{{\mathfrak k}}
\def\frl{{\mathfrak l}}
\def\frm{{\mathfrak m}}
\def\frn{{\mathfrak n}}
\def\fro{{\mathfrak o}}
\def\frp{{\mathfrak p}}
\def\frq{{\mathfrak q}}
\def\frr{{\mathfrak r}}
\def\frs{{\mathfrak s}}
\def\frt{{\mathfrak t}}
\def\fru{{\mathfrak u}}
\def\frv{{\mathfrak v}}
\def\frw{{\mathfrak w}}
\def\frx{{\mathfrak x}}
\def\fry{{\mathfrak y}}
\def\frz{{\mathfrak z}}

\def\cD{{\mathcal D}}
\def\cI{{\mathcal I}}
\def\cL{{\mathcal L}}
\def\cM{{\mathcal M}}
\def\cO{{\mathcal O}}
\def\cC{{\mathcal C}}
\def\cK{{\mathcal K}}
\def\cB{{\mathcal B}}
\def\cN{{\mathcal N}}
\def\cT{{\mathcal T}}
\def\cX{{\mathcal X}}
\def\cZ{{\mathcal Z}}

\renewcommand{\a}{\alpha}
\renewcommand{\b}{\beta}

\def\tra{^{\prime}}
\def\inv{^{-1}}

\def\lra{\longrightarrow}
\def\lmt{\longmapsto}

\newcommand{\eps}{\varepsilon}
\newcommand{\id}{{\operatorname {id}}}
\newcommand{\diam}{{\operatorname {diam}}}
\newcommand{\const}{{\operatorname {const.}\,}}
\renewcommand{\b}     {\beta}
\renewcommand{\a}     {\alpha}
\newcommand{\vep}{\varepsilon}
\newcommand{\A}     {\mathbb{A}}
\newcommand{\Z}     {\mathbb{Z}}
\newcommand{\N}     {\mathbb{N}}
\renewcommand{\P}   {\mathbb{P}}
\newcommand{\D}     {\mathbb{D}}
\newcommand{\E}     {\mathbb{E}}
\newcommand{\M}     {\mathbb{M}}
\newcommand{\X}     {\mathbb{X}}
\newcommand{\V}     {\mathbb{V}}
\renewcommand{\d}   {\operatorname{d}\!}
\newcommand{\tx}    {{\widetilde x}}
\newcommand{\ty}    {{\widetilde y}}
\newcommand{\dist}  {\operatorname{dist}}
\newcommand{\sign}  {\operatorname{sign}}
\newcommand{\Schur} {\operatorname{Schur}}
\newcommand{\Sym}   {\mathfrak{S}}
\newcommand{\heap}[2]{\genfrac{}{}{0pt}{}{#1}{#2}}
\def\1{{\mathchoice {1\mskip-4mu\mathrm l}
                    {1\mskip-4mu\mathrm l}
                    {1\mskip-4.5mu\mathrm l} {1\mskip-5mu\mathrm l}}}
\newcommand{\ssup}[1] {{\scriptscriptstyle{({#1}})}}
\newcommand{\smallsup}[1] {{\scriptscriptstyle{({#1}})}}

\setcounter{section}{0}

\section{Introduction}
In 1922 Lindeberg published his article ''Eine neue Herleitung des Exponentialgesetzes in der
Wahrscheinlichkeitsrechnung'' \cite{Lindeberg:1922b} where he developed a new method to prove the Central Limit Theorem (CLT). Under the name ''replacement trick'' this technique has nowadays become a standard tool in probability theory. The key idea is to derive bounds for a suitable distance, that metricizes weak convergence, between the distribution of a standardized sum of independent and identically distributed (i.i.d., for short) random variables with existing second moments and the distribution of a standardized sum of i.i.d. Gaussian random variables with the same variances, by replacing the original variables by their Gaussian counterparts and controlling the differences. Over the years Lindeberg's method has found a variety of applications, see e.g. \cite{Bolthausen:1982} for a version that proves Berry-Esseen type results for martingales, \cite{Bardetetal:2008} for central limit theorems for dependent processes and \cite{Toda:2012} for an application of Lindeberg's method for proving convergence in distribution of random sums to a Laplace distribution. In particular, recently a renaissance of the replacement trick could be observed, when it was applied to obtain universality results in the context of random matrix theory, especially for local eigenvalue statistics (see \cite{Chatterjee:2006} and \cite{Tao/Vu:2009}). Here a fascinating result is that the limiting distributions of many local eigenvalues statistics of Hermitian random matrices with independent entries otherwise are universal as long as their first four moments agree with that of a standard Gaussian distribution.

The starting point of the current paper is the observation that a standard CLT and a moderate deviation principle (MDP) for i.i.d. random variables share the form of the asymptotic distribution, at least on a logarithmic scale. It is therefore natural to conjecture, that a technique that is successful when proving a CLT is also promising in the context of moderate deviations. To warm up, we will show that this ansatz is in principle justified for sums of i.i.d. random variables with sufficiently many moments in Section 2. However, the moderate deviations for sums of i.i.d. random variables has of course already been thoroughly analyzed in many contributions, see e.g. the standard textbook \cite{Dembo/Zeitouni:LargeDeviations}, or \cite{Eichelsbacher/Loewe:2001}. An interesting observation will be that the proof in the i.i.d. case
can immediately be generalized to martingales. However, in Section 3 our main focus will be on deriving MDPs for random sums.

Random sums are random variables of the form
\begin{equation} \label{randomsum}
S_\nu= \sum_{i=1}^\nu X_i
\end{equation}
where the $(X_i)_i$ are in most of the cases i.i.d. random variables and $\nu$ is another random variable that takes values in the natural numbers $\N$ or $\N_0$ and is independent of the $(X_i)_i$. More precisely $\nu$ is to be thought as $\nu_p$, where $p$ is an extra parameter and the expectation of $\nu_p$ diverges to infinity, when $p$ becomes large. Since random sums are both, theoretically interesting and important in applications (e.g. in insurance mathematics) there is a vast literature on random sums. Standard references are, e.g. \cite{Gnedenko/Korolev:1996} or \cite{Kalashnikov:1997} and many of the references cited therein. As nice recent examples we mention
\cite{Fleischmann/Wachtel:2008}, where the summation variable is given by the number of children of a supercritical Galton-Watson process and \cite{Doebler:2015}, where Stein's method is applied to obtain CLTs for random sums. In a nutshell the situation for the CLT is the following: First of all the kind of CLT we can expect depends on the question, whether or not the $(X_i)_i$ are purely positive (which makes sense in the context of insurance mathematics, where they model the losses an insurance company faces) or not. But even, if the $(X_i)_i$ are centered and have finite, non vanishing second moment, the limiting distribution of the appropriately scaled process of the $S_\nu$ may or may not be Gaussian. The decisive property for this question is, how strongly $\nu$ is asymptotically concentrated in its expected value. In case the limiting distribution in not normal, there is a whole zoo of possible limiting distribution, cf. \cite{Gnedenko/Korolev:1996}, Chapters 3 and 4.

The main part of this article, this Sections 3 and 4, will be devoted to proving an MDP for such random sums. To give the term MDP a precise mathematical meaning, recall that a sequence of random variables $(Y_n)_n$ with values in $\R$ (more general sets such as Polish spaces can be considered, but for our purposes $\R$ is sufficient) obeys a large deviation principle (LDP) with speed $v(n)$ ($v(n)$ is a sequence of real numbers with $v(n) \to \infty$ as $n \to \infty$) and good
rate function $I(\cdot):\R \to \R^+_0$ if
\begin{itemize}
\item $I$  has compact level sets
$N_L:=\{x\in \R: I(x) \le L\}$, for every $L \in  \R^+_0$.
\item
For every open set $G \subseteq \R$ it holds
\begin{equation}
\liminf_{n \to \infty} \frac 1 {v(n)} \log \P(Y_n \in G)\ge -\inf_{x\in G} I(x).
\end{equation}
\item
For every closed set $ A\subseteq \R$ it holds
\begin{equation}
\limsup_{n \to \infty} \frac 1 {v(n)} \log \P(Y_n \in A)\le -\inf_{x\in A} I(x).
\end{equation}
\end{itemize}

Formally, there is no distinction between an MDP and an LDP.
Usually an LDP lives on the scale of a law of large numbers, while MDPs describe the probabilities on a scale
between a law of large numbers and a central limit theorem.
An important difference between an LDP and an MDP is that typically, the rate function in
an LDP will depend on the distribution of the underlying random
variables, while an MDP inherits properties of both, the central limit behavior as well
as of the LDP. For example, one often sees the exponential decay
of moderate deviation probabilities which is typical of the large deviations. On the other hand
the rate function in an MDP quite often is ``universal'' in the sense that it only depends
on the limiting density in the central limit theorem for these variables but not on
individual characteristics of their distributions.
Often even the rate function of an MDP interpolates between the
logarithmic probabilities that can be expected from the central limit theorem and the large deviations
rate function -- even if the limit is not normal
(see e.g. \cite{Eichelsbacher/Schmock:2002}, \cite{Loewe/Merkl:2001}).
Situations where this is not the case are particularly interesting (see e.g. \cite{Eichelsbacher/Loewe:2004} or \cite{Loewe/Meiners:2012}).

Large deviations for special situations of random sums have been considered, among others in \cite{Fleischmann/Wachtel:2008} or \cite{Klueppelberg/Mikosch:1997}. In Sections 3 and 4
we will primarily be interested in their moderate deviation behavior, even though, as a byproduct, we will also obtain a large deviation result.
Another line of research is to consider estimations of cumulants  for various statistics to obtain a precise asymptotic analysis of their distributions, see \cite{SaulisStratulyavichus:1989} and references therein. In \cite{DoeringEichelsbacher:2010} is has been shown how to relate bounds on cumulants to prove an MDP. In Section 3 we will present bounds on cumulants of $S_{\nu}$ due to \cite{DeltuvieneSaulis:2007} and will show MDPs for certain random sums.
Similar to the behavior when central limit theorems are considered,
the precise form of the MDPs will crucially depend on the concentration properties of the summation variable. Finally, in Section 5, we also prove a large deviations principle for certain random sums.

{\textbf{Acknowledgement}}: We are very grateful to anonymous referee for a very careful reading of a first version of this manuscript. His comments helped to improve the correctness of the paper. 

\section{Sums of independent random variables and martingales}

In this section we will meet the replacement trick in a toy example, namely for sums of i.i.d. random variables with a moment generating function that exists in a neighbourhood of the origin. As it turns out the same trick can be applied to martingales, if the martingale differences have a locally existing moment generating function. The first of the following two theorems can already be found in \cite{Dembo/Zeitouni:LargeDeviations}, Theorem 3.7.1. Note that in this reference also the $d$-dimensional case is treated, however, to keep this exposition as simple as possible, we will restrict ourselves to the one-dimensional case. The d-dimensional case is very similar.
We want to show:

\begin{theorem}\label{iid}
Let $X_1, \ldots, X_n, \ldots$ be a sequence
of $\R$-valued i.i.d. random variables such that
\begin{equation} \label{finmomgen}
\Lambda(\lambda):=\log \E[\exp(\lambda X_1)] < \infty
\end{equation}
for all $\lambda\in (-D,D)$ and some $D>0$, and, without loss of generality $\E(X_1) = 0$ and $\V(X_1)=1$. Then for any sequence $(a_n)_n$ with $a_n \to \infty$, such that $a_n/\sqrt n \to 0$ as $n \to \infty$, the sequence
$\frac 1 {a_n \sqrt n}\sum_{i=1}^n X_i =: \frac 1 {a_n \sqrt n} S_n$ obeys an MDP with speed $a_n^2$ and rate function $I(x)=x^2/2$.
\end{theorem}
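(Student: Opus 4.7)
The plan is to verify the Gärtner–Ellis criterion: if the rescaled logarithmic moment generating functions
$$\Lambda_n(\lambda):=\frac{1}{a_n^2}\log\E\exp\!\bigl(\lambda a_n^2\cdot\tfrac{S_n}{a_n\sqrt n}\bigr)=\frac{1}{a_n^2}\log\E e^{t_nS_n},\qquad t_n:=\lambda a_n/\sqrt n,$$
converge to $\lambda^2/2$ for every $\lambda\in\R$, then the MDP follows with speed $a_n^2$ and good rate function $I(x)=\sup_{\lambda}(\lambda x-\lambda^2/2)=x^2/2$. The assumption $a_n/\sqrt n\to 0$ yields $t_n\to 0$, which is what opens the MDP scale to a Taylor expansion.

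To execute this computation in the spirit of Lindeberg, I would bring in an independent family $(Z_i)_i$ of i.i.d.\ $\Ncal(0,1)$ random variables and write the telescoping identity
$$\E e^{t_nS_n}-\E e^{t_n(Z_1+\cdots+Z_n)}=\sum_{k=1}^n\E e^{t_nW_k}\bigl(\E e^{t_nX_k}-\E e^{t_nZ_k}\bigr),\quad W_k:=\sum_{j<k}X_j+\sum_{j>k}Z_j,$$
which holds by the independence of $W_k$ from $(X_k,Z_k)$. Since $\E X_k=\E Z_k=0$ and $\E X_k^2=\E Z_k^2=1$, the Taylor expansions of $s\mapsto\E e^{sX_1}$ and $s\mapsto e^{s^2/2}$ match through second order, so each single-step discrepancy $|\E e^{t_nX_k}-\E e^{t_nZ_k}|$ is $O(|t_n|^3)$ under \eqref{finmomgen}. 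In the i.i.d.\ setting the telescope collapses to the clean ratio identity
$$\log\E e^{t_nS_n}-\frac{nt_n^2}{2}=n\Bigl(\Lambda(t_n)-\frac{t_n^2}{2}\Bigr),\qquad \Lambda(t):=\log\E e^{tX_1}.$$

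By \eqref{finmomgen}, $\Lambda$ is real-analytic on $(-D,D)$ with $\Lambda(0)=\Lambda'(0)=0$ and $\Lambda''(0)=1$, so $\Lambda(t)=t^2/2+O(|t|^3)$ uniformly on $|t|\le D/2$. Since $t_n$ eventually lies in this interval, the right-hand side above is $O(n|t_n|^3)=O(a_n^3/\sqrt n)$, and dividing by $a_n^2$ gives
$$\Lambda_n(\lambda)=\frac{\lambda^2}{2}+O\bigl(a_n/\sqrt n\bigr)\lra\frac{\lambda^2}{2},$$
completing the argument modulo the invocation of Gärtner–Ellis (whose differentiability requirement is trivially met by the quadratic limit).

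The main obstacle, really the only subtle point, is controlling the third-order Taylor remainder uniformly on the shrinking interval where $t_n$ lives and in the prefactors $\E e^{t_nW_k}$ appearing in the telescope. The hypothesis \eqref{finmomgen} is precisely what is needed: it yields analyticity of $\Lambda$ on an open interval about $0$, so the remainder bound is uniform, and the same argument controls the Gaussian analogue automatically. For the martingale extension that presumably follows, the telescope no longer collapses to a single factor, so one must bound the $n$ terms one by one; the corresponding obstacle there will be to replace the analyticity of $\Lambda$ by an analogous conditional finite-MGF hypothesis on the martingale differences, uniform in $k$.
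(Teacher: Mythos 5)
Your argument is correct, and after the telescope ``collapses'' it is in fact the classical direct proof (essentially Theorem 3.7.1 in Dembo--Zeitouni): since the $X_i$ are i.i.d., $\log\E e^{t_nS_n}=n\Lambda(t_n)$ exactly, and smoothness of $\Lambda$ on $(-D,D)$ with $\Lambda(0)=\Lambda'(0)=0$, $\Lambda''(0)=1$ gives $\frac{n}{a_n^2}\Lambda(t_n)=\frac{\lambda^2}{2}+\mathcal{O}(a_n/\sqrt n)\to\lambda^2/2$, whence G\"artner--Ellis applies. The paper deliberately does \emph{not} take this shortcut: it keeps the Lindeberg structure throughout, forming the quotient $\E e^{t\frac{a_n}{\sqrt n}\sum X_i}/\E e^{t\frac{a_n}{\sqrt n}\sum Z_i}$, telescoping over single replacements, matching moments to get the $\mathcal{O}(t^3a_n^3/n^{3/2})$ increment, and then genuinely bounding the prefactors via $\sum_{m}\E e^{t\frac{a_n}{\sqrt n}(W_m+U_m)}\le n\max\{e^{t^2a_n^2/2},(\E e^{ta_nX_1/\sqrt n})^n\}$. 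For i.i.d.\ sums this is strictly more laborious than your computation, but it is the structure that survives when the moment generating function no longer factors: in the martingale theorem that follows, the paper replaces each expansion by its conditional version given $\mathcal{F}_{m-1}$ and bounds $\E e^{t\frac{a_n}{\sqrt n}U_m}$ by iterated conditioning, steps that have no analogue in your collapsed identity. One small internal inconsistency in your writeup: once you invoke the exact factorization, the telescope and the prefactors $\E e^{t_nW_k}$ become vestigial, so your closing paragraph about the ``main obstacle'' of controlling $\E e^{t_nW_k}$ describes a step your proof never performs --- in the paper's version that is precisely where the real work lies, and your correct instinct that the martingale case needs a uniform conditional MGF hypothesis is exactly why the paper assumes bounded martingale differences there.
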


\begin{proof}
It may be instructive to recall Lindeberg's method to prove a CLT in a nutshell. There, one takes a test function $g: \R \to \R$ in the class $C_b^3(\R)$, the class
of a three times differentiable  functions with bounded derivatives, and compares
$\E g(\frac 1 {\sqrt n} S_n)$ to $\E g(\frac 1 {\sqrt n} \sum_{i=1}^n Z_i)$, where the $Z_i$ are i.i.d. standard normal random variables that are independent of the $(X_i)_i$. This comparison is done by writing
\begin{eqnarray*}
&& \biggl|\E g\bigl(\frac 1 {\sqrt n} S_n\bigr)-\E g\bigl(\frac 1 {\sqrt n} \sum_{i=1}^n Z_i\bigr)\biggr|\\
&=& \biggl|\sum_{m=1}^n \E g\bigl(\frac 1 {\sqrt n} (W_m+U_m+X_m)\bigr)-\E g\bigl(\frac 1 {\sqrt n} (W_m+U_m+Z_m)\bigr)\bigr)\biggr|,
\end{eqnarray*}
where $U_m:=\sum_{j=1}^{m-1} X_j$ and $W_m:=\sum_{j=m+1}^n Z_j$ with $U_1=W_n=0$. The summands on the right hand side are then compared by Taylor expanding $g$ and by the fact that the first two moments of the $X_i$ and $Z_i$ match. Therefore each summand is of order $C/n^{3/2}$ for any fixed $g \in C_b^3(\R)$
and therefore the right hand side can be bounded by $C/\sqrt n$. Note that $C$ depends on $g$.

In view of the G\"artner-Ellis theorem for an MDP the test functions are the logarithmic moment generating function and due to the logarithm we have to take quotients instead of differences. There are several (similar) ways to implement this idea. One is to consider for any $t \in \R$ and with the definitions of $U_m$ and $W_m$ as above
\begin{eqnarray*}
\frac{\E e^{t \frac {a_n}{\sqrt n} \sum_{i=1}^n X_i}}{\E e^{t \frac {a_n}{\sqrt n} \sum_{i=1}^n Z_i}}
&=& 1+ \frac{\E e^{t \frac {a_n}{\sqrt n} \sum_{i=1}^n X_i}-\E e^{t \frac {a_n}{\sqrt n} \sum_{i=1}^n Z_i}}{\E e^{t \frac {a_n}{\sqrt n} \sum_{i=1}^n Z_i}}\\
&=& 1+\sum_{m=1}^n \frac{\E e^{t \frac {a_n}{\sqrt n} (W_m+U_m+X_m)}-\E e^{t \frac {a_n}{\sqrt n} (W_m+U_m+Z_m)}}{\E e^{t \frac {a_n}{\sqrt n} \sum_{i=1}^n Z_i}}.
\end{eqnarray*}
 Due to the fact that the $(Z_i)_i$ are independent Gaussian random variables the expression for the denominator is
$$
\E e^{t \frac {a_n}{\sqrt n} \sum_{i=1}^n Z_i} = e^{t^2 a_n^2/2}.
$$
Using the independence of the summands and hence the independence of $W_m+U_m$ and $X_m$ as well as the independence of $W_m+U_m$ and $Z_m$, each summand in the  nominator can be represented as
$$
\E e^{t \frac{a_n}{\sqrt n}(W_m+U_m)}  \bigl( \E e^{t \frac{a_n}{\sqrt n} X_m} - \E e^{t \frac{a_n}{\sqrt n} Z_m} \bigr).
$$
Note that $\frac{a_n}{\sqrt n} \to 0$ as $n \to \infty$, and -- because $\Lambda(\lambda)< \infty$ in a ball around the origin -- $\E e^{t \frac{a_n}{\sqrt n} X_m} < \infty$ for each $t \in \R$ and for $n$ large enough. By dominated convergence, we obtain
$$
\E e^{t \frac{a_n}{\sqrt n} X_m} = 1+t \frac {a_n}{\sqrt n} \E(X_m)+ \frac{t^2} 2 \frac {a^2_n}{n} \E(X^2_m)+ {\mathcal O} \bigl( t^3 \frac {a^3_n}{n^{3/2}} \bigr)
$$
and the same estimate for $\E e^{t \frac{a_n}{\sqrt n} Z_m}$:
$$
\E e^{t \frac{a_n}{\sqrt n} Z_m} = 1+t \frac {a_n}{\sqrt n} \E(Z_m)+ \frac{t^2} 2 \frac {a^2_n}{n} \E(Z^2_m)+ {\mathcal O} \bigl( t^3 \frac {a^3_n}{n^{3/2}} \bigr).
$$
Using the fact that the first two moments of $X_i$ and $Z_i$ agree, we obtain
$$
\E e^{t \frac{a_n}{\sqrt n} X_m} - \E e^{t \frac{a_n}{\sqrt n} Z_m} ={\mathcal O} \bigl( t^3 \frac {a^3_n}{n^{3/2}} \bigr),
$$
and hence
\begin{eqnarray*}
\frac{\E e^{t \frac {a_n}{\sqrt n} \sum_{i=1}^n X_i}}{\E e^{t \frac {a_n}{\sqrt n} \sum_{i=1}^n Z_i}}
&=& 1+ \frac{{\mathcal O} \bigl( t^3 \frac{a_n^3}{n^{3/2}} \bigr) \sum_{m^=1}^n \E e^{t \frac{a_n}{\sqrt n}(W_m+U_m)}}{e^{t^2 a_n^2/2}}.
\end{eqnarray*}
Observe that by monotonicity
$$
\sum_{m^=1}^n \E e^{t \frac{a_n}{\sqrt n}(W_m+U_m)} \le n \max_{m=1, \ldots,n}  \E e^{t \frac{a_n}{\sqrt n}(W_m+U_m)} \le n \max
\{ e^{t^2 a_n^2/2}, (\E e^{\frac{t a_n}{\sqrt n} X_1})^n\}.
$$
Now again due to the fact that $X_1$ is centered, has variance 1, and $\frac{a_n}{\sqrt n}$ is going to 0
$$
(\E e^{\frac{t a_n}{\sqrt n} X_1})^n \le (1+\frac{t^2 a_n^2} n + C\frac{t^3 a_n^3}{n^{3/2}})^n \le e^{t^2 a_n^2}(1+K)
$$
for constants $C$ and $K$. Thus
$$
\frac 1 {a_n^2} \log \frac{\E e^{t \frac {a_n}{\sqrt n} \sum_{i=1}^n X_i}}{\E e^{t \frac {a_n}{\sqrt n} \sum_{i=1}^n Z_i}} \le \tilde K t^3 \frac{a_n}{\sqrt n} \to 0
$$
for some constant $\tilde K$ as $n$ goes to infinity. Interchanging the roles of $X_i$ and $Z_i$ also yields the reverse bound which eventually proves that
$$
\lim_{n \to \infty} \frac 1 {a_n^2} \log \E e^{t \frac {a_n}{\sqrt n} \sum_{i=1}^n X_i}=t^2/2
$$
which in view of the G\"artner-Ellis theorem \cite[Theorem 2.3.6 ]{Dembo/Zeitouni:LargeDeviations} is all we need, since the Legendre-Fenchel-transform
of $t^2/2$ is $t^2/2$.
\end{proof}

In view of Section \ref{sec_cum} let us remark:

\begin{remark} Condition \eqref{finmomgen} is called {\it Cram\'er's condition}. It is known that this condition is equivalent to {\it Bernstein's conditions}
that for $k \geq 3$
$$
\E (X_1^k) \leq \frac 12 k! \, H^{k-2} \E(X_1^2),
$$
where $H$ is a positive absolute constant. We will consider this condition in subsection 3.2. as well.
Moreover note that in the i.i.d. case the MDP in Theorem \ref{iid} can be verified under the less restrictive (necessary and sufficient) conditions $\E X_1 =0$ and
$$
\limsup_{n\to \infty} \frac{1}{a_n^2} \log \bigl( n \, P (|X_1| > a_n \sqrt{n}) \bigr) = -\infty,
$$
see \cite{Ledoux:1992} and \cite{Eichelsbacher/Loewe:2001}.
\end{remark}

An interesting aspect of the above proof is that it can immediately be generalized to martingales with bounded jumps. For such martingales the moderate deviations have been analyzed (even on a path level) in \cite{Dembo:1996} and one could obtain our result via contraction from there. We will use a Lindeberg approach, however.
Indeed, suppose now that $S_n$ is a discrete time martingale such that the martingale differences
$X_n:= S_n-S_{n-1}$ have conditional expectation $\E[X_n| {\mathcal F}_{n-1}]=0$ and conditional variances $\sigma^2=\E[X_n^2|\mathcal{F}_{n-1} ]=1$, where $\mathcal{F}_n = \sigma(X_1, \ldots, X_n)$ is the canonical filtration. Then we can follow the above arguments by simply replacing
the $X_i$ by their conditional versions $\E[X_i|\mathcal{F}_{n-1}]$ to obtain

\begin{theorem}
Let $(X_i)_i$ be a sequence of $\R$-valued, bounded martingale differences
such that
%\begin{equation} \label{condCramer}
%\sup_{n \geq 0} \big\| \E[\exp(\lambda X_n) | \mathcal{F}_{n-1}] \big\|_{L^{\infty}(P)} < \infty
%\end{equation}
%for all $\lambda\in (-D,D)$ and some $D>0$, and
$\E[X_n|\mathcal{F}_{n-1}] = 0$ and $\sigma_n^2 := \E[X_n^2|\mathcal{F}_{n-1}]=1$ a.s. for all $n$.
Then for any sequence $(a_n)_n$ with $a_n \to \infty$, such that $\lim_{n \to \infty} a_n/\sqrt n = 0$, the sequence
$\frac 1 {a_n \sqrt n}\sum_{i=1}^n X_i =: \frac 1 {a_n \sqrt n} S_n$ obeys an MDP with speed $a_n^2$ and rate function $I(x)=x^2/2$.
\end{theorem}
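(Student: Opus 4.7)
I would follow the same Gärtner–Ellis–plus–Lindeberg scheme as in the proof of Theorem \ref{iid}, adapting each step to the martingale setting. Let $M>0$ be a uniform bound with $|X_i|\le M$ a.s., let $(Z_i)_i$ be i.i.d.\ standard Gaussian random variables independent of $(X_i)_i$ and $(\mathcal{F}_n)_n$, and set $\lambda_n := t\, a_n/\sqrt n$, which tends to $0$. It suffices to show
$$
\lim_{n\to\infty}\frac{1}{a_n^2}\log\frac{\E e^{\lambda_n S_n}}{\E e^{\lambda_n\sum_{i=1}^n Z_i}}=0
$$
for every $t\in\R$; then the G\"artner–Ellis theorem (as in Theorem \ref{iid}) gives the MDP with rate $t^2/2$.

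I would express the ratio as a telescoping sum by replacing the $X_m$'s one at a time. With $U_m:=\sum_{j<m}X_j$ and $W_m:=\sum_{j>m}Z_j$ (so that $W_m$ is independent of $\mathcal{F}_m$), each numerator increment equals
$$
\E\Bigl[e^{\lambda_n(U_m+W_m)}\bigl(\E[e^{\lambda_n X_m}\mid\mathcal{F}_{m-1}]-\E e^{\lambda_n Z_m}\bigr)\Bigr].
$$
Now the replacement trick lives inside the conditional expectation. Since $|X_m|\le M$ and $\lambda_n\to 0$, a Taylor expansion yields
$$
\E[e^{\lambda_n X_m}\mid\mathcal{F}_{m-1}]=1+\lambda_n\E[X_m\mid\mathcal{F}_{m-1}]+\tfrac{\lambda_n^2}{2}\E[X_m^2\mid\mathcal{F}_{m-1}]+R_m,
$$
with $|R_m|\le C|\lambda_n|^3$ a.s., and similarly for the Gaussian term. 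By the martingale hypotheses $\E[X_m\mid\mathcal{F}_{m-1}]=0=\E Z_m$ and $\E[X_m^2\mid\mathcal{F}_{m-1}]=1=\E Z_m^2$, so the first two orders cancel and the difference is uniformly $\mathcal{O}(|\lambda_n|^3)$.

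Summing over $m$ and dividing by the Gaussian normalizer $\E e^{\lambda_n\sum Z_i}=e^{t^2 a_n^2/2}$, it remains to control $\sum_{m=1}^n \E e^{\lambda_n(U_m+W_m)}$. The Gaussian factor contributes $\E e^{\lambda_n W_m}=e^{t^2 a_n^2(n-m)/(2n)}\le e^{t^2 a_n^2/2}$. For $\E e^{\lambda_n U_m}$ — and this is the one step where the martingale case genuinely differs from the i.i.d.\ proof — I would iterate conditioning. Using boundedness and the conditional moment conditions, the same Taylor expansion gives
$$
\E[e^{\lambda_n X_j}\mid\mathcal{F}_{j-1}]\le 1+\tfrac{\lambda_n^2}{2}+C|\lambda_n|^3\le \exp\!\bigl(\tfrac{\lambda_n^2}{2}+C|\lambda_n|^3\bigr)\quad\text{a.s.},
$$
so tower-property induction yields $\E e^{\lambda_n U_m}\le \exp\!\bigl(m(\tfrac{\lambda_n^2}{2}+C|\lambda_n|^3)\bigr)\le \exp\!\bigl(\tfrac{t^2 a_n^2}{2}+Ct^3 a_n^3/\sqrt n\bigr)$. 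Combining, the sum is at most $n\,e^{t^2 a_n^2}(1+o(1))$, and the total error is
$$
\left|\frac{\E e^{\lambda_n S_n}}{\E e^{\lambda_n\sum Z_i}}-1\right|
\le C|t|^3\frac{a_n^3}{n^{3/2}}\cdot n\cdot e^{t^2 a_n^2/2}(1+o(1))\cdot e^{-t^2 a_n^2/2}
=\mathcal{O}\!\bigl(a_n^3/\sqrt n\bigr),
$$
so $\frac{1}{a_n^2}$ times its logarithm is $\mathcal{O}(a_n/\sqrt n)=o(1)$, as required. Swapping the roles of $(X_i)$ and $(Z_i)$ gives the matching lower bound.

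The main obstacle, as hinted above, is the MGF estimate for the partial martingale sum $U_m$: in the i.i.d.\ proof one directly factors $\E e^{\lambda U_m}=(\E e^{\lambda X_1})^{m-1}$, but in the martingale case one must argue by successive conditioning. Once boundedness of the differences is used to control the cubic remainder uniformly in $\omega$, the tower property supplies the analogue of the factorization, and the rest of the Lindeberg telescoping goes through unchanged.
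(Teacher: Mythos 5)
Your proof is correct and is essentially the paper's own argument: the same Lindeberg telescoping of the moment generating function ratio with $U_m$ and $W_m$, the same conditional Taylor expansion whose remainder is uniform in $\omega$ precisely because the differences are bounded, the same tower-property iteration for $\E e^{\lambda_n U_m}$, and the same G\"artner--Ellis conclusion. One shared cosmetic imprecision: since $a_n^3/\sqrt n$ need not stay bounded under $a_n/\sqrt n\to 0$, the honest bound is $\E e^{\lambda_n(U_m+W_m)}\le e^{t^2a_n^2/2+C|t|^3a_n^3/\sqrt n}$ rather than $e^{t^2a_n^2/2}(1+o(1))$, so the error is not literally $\mathcal{O}(a_n^3/\sqrt n)$; this is harmless because after taking $\frac{1}{a_n^2}\log$ the extra exponent contributes $\mathcal{O}(|t|^3 a_n/\sqrt n)\to 0$ --- the very simplification the paper itself makes with its ``$(1+K)$'' bound.
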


%\begin{remark}
%Let us again mention that a similar result was obtained by Dembo \cite{Dembo:1996}.
%%Here the main assumption is that the differences are bounded (bounded jumps).
%Condition \eqref{condCramer} is called {\it conditional Cram\'er condition}, see \cite[Theorem 1.1]{Gao:1996}. It is known that the {\it conditional Bernstein condition}
%$$
%\E (X_n^k | {\mathcal F}_{n-1}) | \leq \frac 12 k! H^{k-2} \E(X_n^2  | {\mathcal F}_{n-1})
%$$
%for $k \geq 3$ and every $n \in \N$ implies the conditional Cram\'er condition, see \cite[Section 8]{FanGramaLiu:2013}, as well as the discussion in Section 2.2.
%Conditions that are equivalent to the Bernstein condition can be found in \cite[Proposition 8.1]{FanGramaLiu:2013}.
%For less restrictive conditions see \cite[Theorem 1]{Djellout:2002}.
%\end{remark}

\begin{proof}
Let $X = (X_1, \ldots, X_n)$ be as in the statement of the theorem and $Z_1, \ldots, Z_n$ be independent standard normal distributed random variables that are independent of the $(X_i)_i$. As in the proof of Theorem \ref{iid} we consider $U_m:=\sum_{j=1}^{m-1} X_j$ and $W_m:=\sum_{j=m+1}^n Z_j$ with $U_1=W_n=0$.
Along the lines of the proof of Theorem \ref{iid} we have to consider
$$
\E e^{t \frac {a_n}{\sqrt n} (W_m+U_m+X_m)}-\E e^{t \frac {a_n}{\sqrt n} (W_m+U_m+Z_m)}
$$
for any $t \in \R$. The random variable $W_m$ is normally distributed and is independent of $U_m$, $X_m$ and $Z_m$ as well as of ${\mathcal F}_{m-1}$.
As $U_m$ is ${\mathcal F}_{m-1}$-measurable, we have
\begin{eqnarray*}
\E e^{t \frac {a_n}{\sqrt n} (W_m+U_m+X_m)} &=& \E \bigl(  \E \bigl( e^{t \frac {a_n}{\sqrt n} (W_m+U_m+X_m)} | {\mathcal F}_{m-1} \bigr) \bigr)\\
& =&
 \E e^{t \frac {a_n}{\sqrt n} W_m} \E \bigl(  e^{t \frac {a_n}{\sqrt n} U_m} \E \bigl(  e^{t \frac {a_n}{\sqrt n} X_m} | {\mathcal F}_{m-1} \bigr) \bigr),
 \end{eqnarray*}
 and therefore
\begin{eqnarray*}
&& \E e^{t \frac {a_n}{\sqrt n} (W_m+U_m+X_m)}-\E e^{t \frac {a_n}{\sqrt n} (W_m+U_m+Z_m)} \\
&=& \E e^{t \frac {a_n}{\sqrt n} W_m} \E \bigl(  e^{t \frac {a_n}{\sqrt n} U_m}  \E \bigl(  e^{t \frac {a_n}{\sqrt n} X_m} | {\mathcal F}_{m-1} \bigr)
- \E \bigl(  e^{t \frac {a_n}{\sqrt n} Z_m} | {\mathcal F}_{m-1} \bigr) \bigr).
\end{eqnarray*}
As in the proof of Theorem \ref{iid} note that $\frac{a_n}{\sqrt n} \to 0$ as $n \to \infty$, and because $X_i$ are bounded, of course also $\E \bigl( e^{t \frac{a_n}{\sqrt n} X_m} | {\mathcal F}_{m-1} \bigr) < \infty$ for each $t \in \R$ and for $n$ large enough. By dominated convergence it follows a.s.
$$
\E \bigl( e^{t \frac{a_n}{\sqrt n} X_m} |  {\mathcal F}_{m-1} \bigr) = 1+t \frac {a_n}{\sqrt n} \E(X_m |  {\mathcal F}_{m-1})+ \frac{t^2} 2 \frac {a^2_n}{n} \E(X^2_m  |{\mathcal F}_{m-1} )+ {\mathcal O} \bigl( t^3 \frac {a^3_n}{n^{3/2}} \bigr).
$$
(Note that this is the part where we rely on the boundedness assumption for the $X_i$; if we had just assumed a finite moment generating function as Theorem \ref{iid}, the $\mathcal{O}-term$ would be random variable depending on the rest of the martingale).
By the same Taylor-argument for $\E \bigl( e^{t \frac{a_n}{\sqrt n} Z_m} \bigr)$ we arrive at
$$
\E \bigl( e^{t \frac{a_n}{\sqrt n} X_m} |  {\mathcal F}_{m-1} \bigr) = 1+t \frac {a_n}{\sqrt n} \E(Z_m)+ \frac{t^2} 2 \frac {a^2_n}{n} \E(Z^2_m)+ {\mathcal O} \bigl( t^3 \frac {a^3_n}{n^{3/2}} \bigr).
$$
With  $\E(X_m |  {\mathcal F}_{m-1} )=0 = \E (Z_m)$ and $\E(X_m^2 |  {\mathcal F}_{m-1})=1 = \E (Z_m^2)$ we obtain
$$
\E \bigl(  e^{t \frac {a_n}{\sqrt n} X_m} | {\mathcal F}_{m-1} \bigr)
- \E \bigl(  e^{t \frac {a_n}{\sqrt n} Z_m} | {\mathcal F}_{m-1} \bigr) ={\mathcal O} \bigl( t^3 \frac {a^3_n}{n^{3/2}} \bigr).
$$
Finally we have to bound $\sum_{m=1}^n \E e^{t \frac {a_n}{\sqrt n} (W_m +U_m)}$. We observe that
$$
\sum_{m^=1}^n \E e^{t \frac{a_n}{\sqrt n}(W_m+U_m)} \le n \max_{m=1, \ldots,n}  \E e^{t \frac{a_n}{\sqrt n}(W_m+U_m)}=
n \max_{m=1, \ldots,n}  \E e^{t \frac{a_n}{\sqrt n}W_m}\E e^{t \frac{a_n}{\sqrt n}U_m}.
% \le n \max\{ e^{t^2 a_n^2/2}, \E e^{\frac{t a_n}{\sqrt n} \sum_{j=1}^{n-1}X_j}\},
$$
Now on one hand by Gaussian integration
$$
\E e^{t \frac{a_n}{\sqrt n}W_m}= \exp\left(\frac{a_n^2 t^2}{2}\frac{n-m}n\right).
$$
On the other hand, preparing the same expansion technique as before we note that
\begin{eqnarray*}
\E e^{t \frac{a_n}{\sqrt n}U_m}&=&\E \bigl( e^{\frac{t a_n}{\sqrt n} \sum_{j=1}^{m-1}X_j} \bigr) = \E \bigl( \E  \bigl(e^{\frac{t a_n}{\sqrt n} \sum_{j=1}^{m-1}X_j} | {\mathcal F}_{m-2} \bigl) \bigr)\\
& = &
\E \biggl( \prod_{j=1}^{m-2} e^{\frac{t a_n}{\sqrt n} X_j} \, \E \bigl( e^{\frac{t a_n}{\sqrt n} X_{m-1}} | {\mathcal F}_{m-2} \bigl) \biggr)
\end{eqnarray*}
Now by our assumption we have
\begin{eqnarray*}
\E \bigl( e^{\frac{t a_n}{\sqrt n} X_{m-1}} | {\mathcal F}_{m-2} \bigl) & = &1 + t \frac{a_n}{\sqrt n} \E (X_{m-1} | {\mathcal F}_{m-2}) +
\frac{t^2}{2} \frac{a_n^2}{n} \E (X_{m-1}^2 | {\mathcal F}_{m-2}) + {\mathcal O} \bigl( t^3 \frac {a^3_n}{n^{3/2}} \bigr)\\
& = &1 + \frac{t^2}{2} \frac{a_n^2}{n}
+ {\mathcal O} \bigl( t^3 \frac {a^3_n}{n^{3/2}} \bigr),
\end{eqnarray*}
yielding by iteration
$$
 \E \bigl( e^{\frac{t a_n}{\sqrt n} \sum_{j=1}^{m-1}X_j} \bigr) \leq \biggl( 1 + \frac{t^2}{2} \frac{a_n^2}{n}
+ {\mathcal O} \bigl( t^3 \frac {a^3_n}{n^{3/2}} \bigr) \biggr)^{m-1}.
$$
Putting this together with the Gaussian part we see that
$$
\E e^{t \frac {a_n}{\sqrt n} (W_m +U_m)}\le \exp\left(\frac{t^2}2 a_n^2\right)(1+K)
$$
for some constant $K$ (actually for any $K$, if $n$ is large enough).
Now along the same arguments as in the proof of Theorem \ref{iid} we end up with the desired statement.
\end{proof}

\section{Random sums}
We now turn to the core of this paper, the derivation of MDPs for random sums.
In general random sums of the following type are considered. Let $(X_i)_i$ be independent random variables and denote $S_k := \sum_{j=1}^k X_j$.
Let $(a_k)_k$ and $(b_k)_k$ be sequences of real numbers with $b_k >0$ for $k \geq 1$ and denote
\begin{equation} \label{Yk}
Y_k = \frac{S_k - a_k}{b_k}.
\end{equation}
Let $(\nu_k)_k$ be positive integer valued random variables independent of the sequence $(X_i)_i$ for every $k \geq 1$. The aim is to study the asymptotic behaviour of the random variables
$$
Z_k = \frac{S_{\nu_k} - c_k}{d_k},
$$
where $(c_k)_k$ and $(d_k)_k$ are sequences of real numbers, $d_k >0$ for $k \geq 1$.  Assume that
\begin{equation} \label{conv}
(Y_k)_k \quad \text{converges weakly to some random variable} \quad Y \quad \text{as} \quad k \to \infty.
\end{equation}
Theorem 3.1.2 in \cite{Gnedenko/Korolev:1996} presents possible approximating laws:

\begin{theorem}
Let sequences of real numbers $(a_k)_k, (b_k)_k, (c_k)_k$ and $(d_k)_k$  be such that $b_k \to \infty, d_k \to \infty$ for $k \to \infty$ and condition \eqref{conv} is satisfied.
Let
$$
\biggl( \frac{b_{\nu_k}}{d_k}, \frac{a_{\nu_k} - c_k}{d_k} \biggr) \Rightarrow (U,V)
$$
as $k \to \infty$ for some random variables $U$ and $V$. Then
$$
\lim_{k \to \infty} P(Z_k < x) = \E H\bigl( \frac{x-V}{U} \bigr),
 $$
 where $H$ denotes the distribution function of $Y$.
\end{theorem}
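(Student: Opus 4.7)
The plan is to condition on $\nu_k$, use its independence from $(X_i)_i$ to reduce the problem to two separate weak convergences -- that of $Y_n$ and that of the random normalizations -- and then combine them by dominated convergence. Write $U_k := b_{\nu_k}/d_k$ and $V_k := (a_{\nu_k}-c_k)/d_k$, and let $H_n$ denote the distribution function of $Y_n$. Since $S_n = a_n + b_n Y_n$, one has $\{Z_k < x\} = \{Y_n < (x-V_k)/U_k\}$ on $\{\nu_k = n\}$, so independence yields
\begin{equation*}
P(Z_k < x) \;=\; \sum_n P(\nu_k = n)\, H_n\!\Bigl( \tfrac{x - (a_n-c_k)/d_k}{b_n/d_k} \Bigr) \;=\; \E\!\left[ H_{\nu_k}\!\Bigl( \tfrac{x-V_k}{U_k} \Bigr) \right].
\end{equation*}

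Next I would argue that $\nu_k \to \infty$ in probability: since $b_k \to \infty$, $d_k \to \infty$, and $b_{\nu_k}/d_k \Rightarrow U$, any subsequential concentration of $\nu_k$ on a bounded set would keep $b_{\nu_k}$ bounded, incompatible with a finite nondegenerate $U$. By $Y_n \Rightarrow Y$ one has $H_n(y) \to H(y)$ at every continuity point of $H$; when $H$ is continuous this is uniform on $\R$ by P\'olya's theorem, and together with $\nu_k \to \infty$ in probability this gives $\sup_y \lvert H_{\nu_k}(y) - H(y) \rvert \to 0$ in probability.

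The conclusion follows from the decomposition
\begin{equation*}
\E\!\left[H_{\nu_k}\!\Bigl(\tfrac{x-V_k}{U_k}\Bigr)\right] - \E\!\left[H\!\Bigl(\tfrac{x-V}{U}\Bigr)\right] = \E\!\left[(H_{\nu_k}-H)\!\Bigl(\tfrac{x-V_k}{U_k}\Bigr)\right] + \E\!\left[H\!\Bigl(\tfrac{x-V_k}{U_k}\Bigr) - H\!\Bigl(\tfrac{x-V}{U}\Bigr)\right].
\end{equation*}
The first summand vanishes by the previous step together with bounded convergence, since CDFs take values in $[0,1]$. For the second, the joint weak convergence $(U_k,V_k) \Rightarrow (U,V)$ combined with the continuous mapping theorem applied to the map $(u,v) \mapsto H((x-v)/u)$ (continuous outside a $(U,V)$-null set when $x$ is a continuity point of the limit law) delivers weak convergence inside the expectation, and a further application of bounded convergence closes the argument.

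The main obstacle is the continuity bookkeeping at discontinuities of the various limiting laws: because $H$ need not be continuous, one must verify that $x$ is a continuity point of the limit distribution of $Z_k$ and that $(U,V)$ places no mass on the set where $(u,v) \mapsto H((x-v)/u)$ jumps. The uniformity furnished by P\'olya's theorem in Step~2 is essential precisely because it allows the random evaluation point $(x-V_k)/U_k$ to vary freely with $k$ while the random distribution function $H_{\nu_k}$ is simultaneously replaced by the deterministic limit $H$.
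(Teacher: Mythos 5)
First, a point of orientation: the paper contains no proof of this statement. It is quoted verbatim as Theorem 3.1.2 of Gnedenko and Korolev \cite{Gnedenko/Korolev:1996} (the ``transfer theorem''), so there is no internal argument to compare yours against; the reference's own proof proceeds through characteristic functions, not distribution functions, and that difference is exactly where your proposal runs into trouble.

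Your conditioning identity $\P(Z_k<x)=\E\bigl[H_{\nu_k}\bigl(\frac{x-V_k}{U_k}\bigr)\bigr]$ is correct (using $b_n>0$ and the independence of $\nu_k$ from $(X_i)_i$) and is the natural starting point. But as written your argument proves the theorem only when $H$ is continuous, and you concede this yourself: P\'olya's theorem is unavailable otherwise, so the first summand of your decomposition has no reason to vanish, and the continuous mapping step for $(u,v)\mapsto H\bigl(\frac{x-v}{u}\bigr)$ requires precisely the unverified condition that $(U,V)$ charges no discontinuity of this map. Since the theorem is stated for an arbitrary limit law $Y$, declaring this ``continuity bookkeeping'' an obstacle rather than discharging it leaves a genuine gap. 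Two further points: the map is undefined at $u=0$, so you need $\P(U>0)=1$ (implicit in the statement for $\E H\bigl(\frac{x-V}{U}\bigr)$ to make sense), and your derivation of $\nu_k\to\infty$ in probability secretly uses this -- if $\nu_k$ charged a bounded set along a subsequence, then $b_{\nu_k}/d_k\to 0$ there and $U$ would acquire an atom at $0$; ``nondegenerate $U$'' is the wrong dichotomy, since the typical case $U\equiv 1$ is degenerate and perfectly admissible. The standard repair, and the route the cited source takes, is to work with characteristic functions: the same conditioning gives $\E e^{itZ_k}=\E\bigl[h_{\nu_k}(tU_k)\,e^{itV_k}\bigr]$ with $h_n$ the characteristic function of $Y_n$; by the L\'evy continuity theorem $h_n\to h$ uniformly on compacts \emph{regardless} of any continuity of $H$, which together with $\nu_k\to\infty$ in probability, tightness of $(U_k,V_k)$, and $(U_k,V_k)\Rightarrow(U,V)$ yields $\E e^{itZ_k}\to\E\bigl[h(tU)e^{itV}\bigr]$, the characteristic function of $UY+V$ with $Y$ independent of $(U,V)$; the claimed limit then follows at continuity points of $x\mapsto \E H\bigl(\frac{x-V}{U}\bigr)$ (a caveat the quoted statement suppresses but which is needed in any reading). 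Alternatively, your cdf-level argument can be completed by applying the Skorokhod representation to $(U_k,V_k,1/\nu_k)\Rightarrow (U,V,0)$ and invoking the elementary fact that $H_{n_k}(t_k)\to H(t)$ whenever $n_k\to\infty$, $t_k\to t$, and $t$ is a continuity point of $H$; but this still requires $\P\bigl(\frac{x-V}{U}\in D_H\bigr)=0$ for the discontinuity set $D_H$ of $H$ -- the hypothesis you named but did not verify.
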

\medskip

\subsection{The Gaussian case with Lindeberg's method}
In this subsection we consider the case where random sums have a normal distribution in limit. For this case the following conditions are necessary and sufficient
for a central limit theorem (see \cite[Theorem 3.3.3]{Gnedenko/Korolev:1996}):

\begin{theorem} \label{nor}
Assume that $(\nu_k)_k$ converges to $\infty$ in probability as $k \to \infty$ and assume \eqref{conv}.
$(Z_k)_k$ converges weakly to a standard normal distributed random for some sequences of positive numbers $(d_k)_k$ with $d_k \to \infty$ as $k \to \infty$ {\it if and only if} the following conditions are fulfilled:
$$P(Y<x) = \Phi(x) \qquad \mbox{and } \quad \frac{b_{\nu_k}}{d_k}\to 1$$ weakly as $k \to \infty$. Here $\Phi$ denotes the distribution function of a standard normal distributed random variable.
\end{theorem}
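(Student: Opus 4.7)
The plan is to combine the transfer theorem (Theorem 3.1.2 above) with a characteristic-function computation and an identifiability argument for Gaussian mixtures.

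For sufficiency, assume $Y\sim\Phi$ and $b_{\nu_k}/d_k\to 1$ in probability for some positive sequence $(d_k)_k$ with $d_k\to\infty$. Choose $c_k$ so that $(a_{\nu_k}-c_k)/d_k\to 0$ in probability; this is always possible under the existential clause of the statement. The random pair in the hypothesis of Theorem~3.1.2 then converges to the constant $(U,V)=(1,0)$, and the conclusion of that theorem with $H=\Phi$ gives $P(Z_k<x)\to\E\Phi((x-V)/U)=\Phi(x)$, which is the desired convergence to the standard normal.

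For necessity, suppose $Z_k\Rightarrow\mathcal{N}(0,1)$ for some sequences $(c_k,d_k)$. Conditioning on $\nu_k$ and writing $S_n=b_nY_n+a_n$ yields
$$
\varphi_{Z_k}(t)=\E\Bigl[\exp\bigl(it(a_{\nu_k}-c_k)/d_k\bigr)\,\varphi_{Y_{\nu_k}}(t\,b_{\nu_k}/d_k)\Bigr].
$$
Since $\nu_k\to\infty$ in probability and $Y_n\Rightarrow Y$, the functions $\varphi_{Y_{\nu_k}}$ converge to $\varphi_Y$ uniformly on compact sets. I would then secure tightness of the joint law of $(b_{\nu_k}/d_k,(a_{\nu_k}-c_k)/d_k)$: escape of $b_{\nu_k}/d_k$ to $0$ or to $\infty$, or escape of the shift to $\pm\infty$, would each contradict the assumed Gaussian limit for $Z_k$. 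Passing to the limit along any convergent subsequence with limit pair $(U,V)$, one obtains
$$
\E\bigl[e^{itV}\varphi_Y(tU)\bigr]=e^{-t^2/2}\qquad(t\in\R).
$$

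The main obstacle will be identifying $Y$ and $(U,V)$ separately from this single identity. The identity says $U\tilde Y+V\sim\mathcal{N}(0,1)$, where $\tilde Y$ has the law of $Y$ and is independent of $(U,V)$. I would first rule out any genuine spread in $U$ by a tail/cumulant computation, so that $U\equiv u_0$ is deterministic; then $u_0\tilde Y+V$ is a sum of two independent variables equal in law to the standard normal, and Cram\'er's decomposition theorem for the normal law forces both summands to be Gaussian. Matching variances with the target law pins down $u_0=1$, $Y\sim\Phi$ and $V=0$. Since every subsequential limit of the tight sequence is thereby $(1,0)$, the tightness upgrades to convergence in probability of $b_{\nu_k}/d_k$ to $1$, completing the proof.
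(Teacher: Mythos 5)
The paper offers no proof of this statement at all --- it is imported verbatim from \cite[Theorem 3.3.3]{Gnedenko/Korolev:1996} --- so your argument has to stand on its own, and it does not. Already in the sufficiency half, the step ``choose $c_k$ so that $(a_{\nu_k}-c_k)/d_k\to 0$ in probability; this is always possible'' is unjustified: $a_{\nu_k}$ is a deterministic function of the \emph{random} index $\nu_k$, and no deterministic shift $c_k$ can suppress fluctuations of $a_{\nu_k}$ that live on the scale $d_k$. Whether such a $c_k$ exists is a genuine hypothesis tying the sequence $(a_k)_k$ to the concentration of $(\nu_k)_k$; it holds trivially in the standardized setting in which the paper actually applies the theorem ($a_k=c_k=0$), but for the general statement it must be assumed or derived, and you do neither.

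The decisive gap, however, sits exactly where you flag the ``main obstacle'' in the necessity half, and the proposed repair cannot work. The identity $\E\bigl[e^{itV}\varphi_Y(tU)\bigr]=e^{-t^2/2}$, i.e.\ $U\widetilde Y+V\sim\mathcal{N}(0,1)$ with $\widetilde Y$ independent of $(U,V)$, admits a one-parameter family of solutions: for every $u_0\in(0,1]$ take $Y\sim\Phi$, $U\equiv u_0$ and $V\sim\mathcal{N}(0,1-u_0^2)$. Hence no tail or cumulant computation performed on this identity alone can force $u_0=1$: showing $U$ is deterministic is consistent with all of these solutions, Cram\'er's decomposition does yield that $u_0\widetilde Y$ and $V$ are Gaussian, but matching variances gives only $u_0^2\,\V(Y)+\V(V)=1$, which is underdetermined. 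Nor are these phantom solutions: with $X_i$ i.i.d., mean $a\neq 0$, variance $1$ (so $b_k=\sqrt k$), $\nu_k=\lfloor u_0^2d_k^2+d_k\eta_k\rfloor$ with $\eta_k\Rightarrow\mathcal{N}\bigl(0,(1-u_0^2)/a^2\bigr)$ and $c_k=au_0^2d_k^2$, one gets $Z_k\Rightarrow\mathcal{N}(0,1)$ while $b_{\nu_k}/d_k\to u_0<1$; a variant even realizes $U\equiv 0$, refuting your tightness claim that escape of $b_{\nu_k}/d_k$ to $0$ contradicts the Gaussian limit. These constructions show that the equivalence needs an additional constraint on the centering term $(a_{\nu_k}-c_k)/d_k$ (present in the source's treatment, suppressed in the excerpt because it is automatic in the standardized case $a_k=c_k=0$ used later in the paper), together with the fact that $U$ and $V$ arise as limits of functions of the \emph{same} $\nu_k$ --- structure your argument discards when it passes to an abstract limit pair. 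In the paper's application, $a_k=c_k=0$ forces $V\equiv 0$, and there your scheme does close, since $\E\,e^{-t^2U^2/2}=e^{-t^2/2}$ for all $t$ forces $U\equiv 1$ by uniqueness of Laplace transforms; as a proof of the stated general equivalence, it fails at the identifiability step.
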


In this paper we consider only the case of i.i.d. random variables $(X_i)_i$ with mean and finite positive variance
$$
a=\E(X_1), \qquad  0 < c^2 = \V(X_1) < \infty.
$$
Hence in \eqref{Yk} we choose $a_k= a \, k$ and $b_k = c \, \sqrt{k} $.

In addition, the mean and the variance of $\nu_k$ are denoted by
$$
\mu_k = \E(\nu_k), \qquad  \gamma_k^2 = \V(\nu_k).
$$
Therefore we choose $c_k = a \, \mu_k$ and $d_k^2 = c^2 \mu_k
+ a^2 \gamma_k^2$ applying Blackwell-Girshick.

Hence for the choice of i.i.d. random variables $(X_i)_i$ the second condition in Theorem \ref{nor} reads as
$\frac{c \sqrt{\nu_k}}{d_k} \to 1$ weakly. Finally we restrict ourselves to the standardized version $a=0$ and $c^2=1$, which leads to $c_k=0$ and $d_k^2= \mu_k$.

We will therefore assume that $\nu_k \to\infty$ in probability and
\begin{equation}\label{Gaussiancase}
\frac{\nu_k}{\mu_k} \to 1 \quad \mbox{in probability, as } k \to \infty.
\end{equation}
Thus \eqref{Gaussiancase} is a necessary and sufficient condition for the convergence in distribution of
$$
Z_{k,0}:= \frac 1{\sqrt {\mu_k}} \sum_{i=1}^{\nu_k} X_i
$$
towards a standard normal distribution.

Naturally, if we ask for moderate deviations for rescaled sums of such type, we need also stronger concentration properties of $\nu_k$. Let us introduce the quantities of interest
\begin{equation}\label{Zka}
Z_{k,\alpha}:= \frac 1{\mu_k^{1/2+\alpha}} \sum_{i=1}^{\nu_k} X_i
\end{equation}
for $0< \alpha <1/2$. To study their moderate deviations we will assume the following concentration properties of $(\nu_k)_k$.

\begin{ass}\label{firstass}
Assume that the sequence of random variables $\bigl( \nu_k/\mu_k \bigr)_k$ obeys an LDP with speed $\mu_k^\beta$, for some $\beta > 2\alpha$ and rate function $I$ that has a unique minimum $I(x)=0$ for $x=1$.
\end{ass}

\noindent
Under this assumption we are able to show

\begin{theorem}\label{randsumgauss}
Suppose that the $(X_i)_i$ are i.i.d. centred random variables with variance one and that \eqref{finmomgen} is satisfied. Moreover assume that the random summation index $(\nu_k)_k$ satisfies Assumption \ref{firstass}.
Then, for $0< \alpha <1/2$, the sequence of random variables $(Z_{k,\alpha})_k$ satisfies and MDP with speed $\mu_k^{2\a}$ and rate function $J(x)=x^2/2$.
\end{theorem}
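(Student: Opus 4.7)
The plan is to verify the hypotheses of the G\"artner--Ellis theorem \cite[Theorem~2.3.6]{Dembo/Zeitouni:LargeDeviations} by showing that, for every $t \in \R$,
$$
\lim_{k \to \infty} \frac{1}{\mu_k^{2\alpha}} \log \E \exp\bigl( t\, \mu_k^{2\alpha}\, Z_{k,\alpha} \bigr) = \frac{t^2}{2},
$$
which, since $t^2/2$ is self-conjugate under the Legendre--Fenchel transform, immediately produces the MDP with rate function $J(x) = x^2/2$.

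Set $\lambda_k := t\, \mu_k^{\alpha - 1/2}$ and $M(\lambda) := \E\, e^{\lambda X_1}$, and write $V_k := \nu_k/\mu_k$. Conditioning on $\nu_k$ and exploiting the independence of $\nu_k$ from $(X_i)_i$ gives
$$
\E \exp\bigl( t\, \mu_k^{2\alpha}\, Z_{k,\alpha} \bigr) = \E \exp\bigl( \lambda_k S_{\nu_k} \bigr) = \E\bigl[ M(\lambda_k)^{\nu_k} \bigr].
$$
Cram\'er's condition \eqref{finmomgen} together with $\E X_1 = 0$ and $\V(X_1) = 1$ makes the Taylor expansion $M(\lambda_k) = \exp\bigl(\lambda_k^2/2 + \O(\lambda_k^3)\bigr)$ valid as $\lambda_k \to 0$, exactly as in the proof of Theorem~\ref{iid}. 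Therefore
$$
M(\lambda_k)^{\nu_k} = \exp\!\Bigl( \tfrac{t^2}{2}\, \mu_k^{2\alpha} V_k + \O\bigl( t^3\, \mu_k^{3\alpha - 1/2} V_k \bigr) \Bigr),
$$
and $\alpha < 1/2$ implies $3\alpha - 1/2 < 2\alpha$, so the Lindeberg cubic remainder is of strictly smaller exponential order than $\mu_k^{2\alpha}$ on any event where $V_k$ is bounded.

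The problem has thereby been reduced to a statement about the summation index alone, namely that
$$
\frac{1}{\mu_k^{2\alpha}} \log \E \exp\!\Bigl( \tfrac{t^2}{2}\, \mu_k^{2\alpha} V_k \Bigr) \longrightarrow \frac{t^2}{2}.
$$
Fix $\e > 0$. The lower bound follows by restricting the expectation to $\{|V_k - 1| \leq \e\}$ and using that $V_k \to 1$ in probability (a consequence of Assumption~\ref{firstass}). For the upper bound, split at the level $1+\e$: on $\{V_k \leq 1+\e\}$ the integrand is at most $\exp(\tfrac{t^2}{2} \mu_k^{2\alpha}(1+\e))$, while on $\{V_k > 1+\e\}$ the LDP upper bound at the strictly faster speed $\mu_k^\beta$, together with $I(1+\e) > 0$, yields a contribution of order $\exp(-c(\e)\mu_k^\beta + o(\mu_k^\beta))$, which is negligible on the scale $\mu_k^{2\alpha}$ because $\beta > 2\alpha$. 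Letting $\e \downarrow 0$ closes both bounds.

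The principal obstacle is the tail estimate in the last paragraph: the LDP for $V_k$ controls probabilities only, and bounding $\E[e^{s \mu_k^{2\alpha} V_k}\, \1_{\{V_k > 1+\e\}}]$ requires the LDP decay $e^{-I(V_k)\mu_k^\beta}$ to dominate the exponential growth $e^{s \mu_k^{2\alpha} V_k}$ uniformly in $V_k$. This is where the good-rate-function property (coerciveness of $I$) and the gap $\beta > 2\alpha$ are both indispensable; the same gap also absorbs the Lindeberg remainder $\O(t^3 \mu_k^{3\alpha-1/2} V_k)$ on the atypical events where $V_k$ is large.
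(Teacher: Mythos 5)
Your proposal is correct and, for the random summation index, coincides in substance with the paper's proof: the same lower bound by restricting to a neighbourhood of $1$ using $\nu_k/\mu_k \to 1$ in probability, and the same upper bound by splitting at $1+\e$ and killing the tail with the LDP at the faster speed $\mu_k^\beta$, $\beta>2\a$. Where you genuinely diverge is in the treatment of the summands: the paper first settles the standard Gaussian case (where $\E e^{t\mu_k^{\a-1/2}\sum_{i=1}^n Y_i}=e^{nt^2\mu_k^{2\a-1}/2}$ exactly) and then transfers to general $X_i$ via the Lindeberg replacement trick, sandwiching the ratio of moment generating functions between $\exp(\mp C_i t^3 \mu_k^{3\a-3/2}n)$ for each fixed $n$; you instead exploit the i.i.d.\ structure directly, writing $\E e^{\lambda_k S_{\nu_k}}=\E[M(\lambda_k)^{\nu_k}]$ and Taylor-expanding $\log M(\lambda_k)=\lambda_k^2/2+\O(\lambda_k^3)$. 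Your one-step expansion is more elementary and produces the identical correction exponent $\mu_k^{3\a-1/2}V_k$ that appears in the paper as the factors $e^{\pm C_i t^3\mu_k^{3\a-1/2}x}$; note moreover that since this correction is linear in $V_k$, the coefficient of $\mu_k^{2\a}V_k$ in the total exponent converges to $t^2/2$ uniformly, so the absorption works for all $V_k$, not merely ``where $V_k$ is bounded'' as you hedge. What the paper's detour buys is chiefly thematic and structural: it showcases the replacement method the paper is about, and the fixed-$n$ sandwich is the form of the argument that survives beyond the i.i.d.\ setting (compare the martingale case in Section 2), whereas your computation is the shorter route in the i.i.d.\ case.

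One caveat, where you are no more but also no less rigorous than the paper: for the tail term $\E\bigl[e^{s\mu_k^{2\a}V_k}\1_{\{V_k>1+\e\}}\bigr]$ the LDP controls probabilities, not exponential moments of an unbounded integrand, and coerciveness of $I$ alone does not formally dominate the linear perturbation $f_k(x)=sx\mu_k^{2\a-\b}$ uniformly in $x$ (a rate function growing like $\log x$ has compact level sets and yet leaves the tail sum infinite, so the moment generating function could even fail to be finite). The paper resolves exactly this point by invoking Lemma 2.2 of \cite{Varadhan:StFlour} with $f_k \to 0$ pointwise; your closing paragraph correctly identifies the obstacle but asserts rather than proves that goodness of $I$ plus $\b>2\a$ suffice. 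To close this cleanly you should either cite such a Varadhan-type lemma or impose an explicit exponential-moment condition on $\nu_k$, in the spirit of Assumption \ref{thirdass} which the paper introduces for the analogous step in its large deviation section.
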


\begin{remark}
Note that this results fits into the folklore that for random variables that satisfy an ordinary Central Limit Theorem that rate function in an MDP is always $x^2/2$.
\end{remark}

\begin{proof}
We will again apply the G\"artner-Ellis-Theorem. To this end we will start with a sequence $(X_i)_i$ of i.i.d. standard normal random variables.
We obtain
\begin{eqnarray*}
\E\bigl( e^{t \mu_k^{2\alpha} Z_{k,\a}}\bigr)&=& \E \bigl( \E[e^{t \mu_k^{\alpha-\frac 12 } \sum_{i=1}^{\nu_k} X_i}|\nu_k] \bigr)\\
&=&\E\big( e^{\frac{t^2}2 \mu_k^{2\alpha-1}\nu_k}\big)\\
&=& \sum_{n \ge 0}\P(\nu_k=n) e^{\frac{n t^2}2 \mu_k^{2\alpha-1}}\\
&=& \sum_{x \ge 0}\P\left(\frac{\nu_k}{\mu_k}=x\right) e^{\frac{x t^2}2 \mu_k^{2\alpha}},\\
\end{eqnarray*}
where the last sum is, of course, taken over all $x$ in the image of the random variable $\frac{\nu_k}{\mu_k}$. We wish to show that
$$
\lim_{k \to \infty} \frac 1{\mu_k^{2 \a}}\log \sum_{x \ge 0}\P\left(\frac{\nu_k}{\mu_k}=x\right) e^{\frac{x t^2}2 \mu_k^{2\alpha}}= t^2/2.
$$
To this end we proceed in a way, that is well established in the large deviation literature.
For the lower bound just note that for any $\vep>0$
\begin{eqnarray*}
\sum_{x \ge 0}\P\left(\frac{\nu_k}{\mu_k}=x\right) e^{\frac{x t^2}2 \mu_k^{2\alpha}} & \ge &
\sum_{x \in (1-\vep, 1+\vep)}\P\left(\frac{\nu_k}{\mu_k}=x\right) e^{\frac{x t^2}2 \mu_k^{2\alpha}} \\
&\ge& e^{\frac{(1-\vep) t^2}2 \mu_k^{2\alpha}} \P\left(\frac{\nu_k}{\mu_k}\in (1-\vep, 1+\vep)\right).
\end{eqnarray*}
Now since we assumed that $\frac{\nu_k}{\mu_k}$ converges to 1 in probability, we immediately have that 
$$
\lim \frac 1{\mu_k^{2 \a}}\log \P\left(\frac{\nu_k}{\mu_k}=1\right) =0.
$$
This readily implies 
$$
\liminf_{k \to \infty} \frac 1{\mu_k^{2 \a}}\log\E \bigl( e^{t \mu_k^{2\alpha} Z_{k,\a}} \bigr) \ge (1-\vep)t^2/2.
$$
Since this is true for all $\vep>0$, we obtain
$$
\liminf_{k \to \infty} \frac 1{\mu_k^{2 \a}}\log\E \bigl( e^{t \mu_k^{2\alpha} Z_{k,\a}} \bigr) \ge t^2/2.
$$
On the other hand, for $\vep >0$ we denote by $B_\vep(1)$ the ball of radius $\vep$ centered in 1 in the set of images of $\frac{\nu_k}{\mu_k}$ (denoted by $\mathrm{Im}[\frac{\nu_k}{\mu_k}]$). Then we obtain
\begin{eqnarray}\label{zweisumm}
&& \sum_{x \ge 0} \P\left(\frac{\nu_k}{\mu_k}=x\right) e^{\frac{x t^2}2 \mu_k^{2\alpha}} \nonumber \\
&=&
\sum_{x \in B_\vep(1)}\P\left(\frac{\nu_k}{\mu_k}=x\right) e^{\frac{x t^2}2 \mu_k^{2\alpha}}+
\sum_{x \in B_\vep^c(1)}\P\left(\frac{\nu_k}{\mu_k}=x\right) e^{\frac{x t^2}2 \mu_k^{2\alpha}}.
\end{eqnarray}
As
$$
I_\vep:= \inf_{x \in B_\vep^c(1)} I(x) >0
$$
there exists $\delta >0$, such that even $I_\vep-\delta >0$.
On the other hand, by the LDP assumption on $(\nu_k/\mu_k)_k$ for $k$ large enough we have
$$
\sum_{x \in B_\vep^c(1)}\P\left(\frac{\nu_k}{\mu_k}=x\right) e^{\frac{x t^2}2 \mu_k^{2\alpha}} \ge
\sum_{x \in B_\vep^c(1)} e^{-\mu_k^\b (I_\vep -\delta -f_k(x))},
$$
where
$$
f_k(x)=x \frac{t^2}2 \mu_k^{2 \alpha-\b}.
$$
As $\lim_{k \to \infty}f_k(x)= 0 $ for every $x$ we can apply Lemma 2.2. in \cite{Varadhan:StFlour} to obtain
that the second summand on the right in \eqref{zweisumm} behaves like  $e^{-\mu_k^\b (I_\vep -\delta)}$ which implies that
$$
\lim_{k \to \infty} \frac 1{\mu_k^{2 \a}}\log \sum_{x \in B_\vep^c(1)}\P\left(\frac{\nu_k}{\mu_k}=x\right) e^{\frac{x t^2}2 \mu_k^{2\alpha}}=-\infty.
$$
For the first summand on the right in \eqref{zweisumm} observe that
$$
\sum_{x \in B_\vep(1)}\P\left(\frac{\nu_k}{\mu_k}=x\right) e^{\frac{x t^2}2 \mu_k^{2\alpha}} \le e^{(1+\vep) \frac{t^2}2 \mu_k^{2\a}}.
$$
With
$$
T_1 :=\limsup_k\frac 1{\mu_k^{2 \a}}\log \sum_{x \in B_\vep(1)}\P\left(\frac{\nu_k}{\mu_k}=x\right) e^{\frac{x t^2}2 \mu_k^{2\alpha}}
$$
and
$$
T_2 := \limsup_k \frac 1{\mu_k^{2 \a}}\log \sum_{x \in B_\vep^c(1)}\P\left(\frac{\nu_k}{\mu_k}=x\right) e^{\frac{x t^2}2 \mu_k^{2\alpha}}
$$
we obtain altogether that
$$
\lim_{k \to \infty} \frac 1{\mu_k^{2 \a}}\log \sum_{x}\P\left(\frac{\nu_k}{\mu_k}=x\right) e^{\frac{x t^2}2 \mu_k^{2\alpha}}
\le \max \{T_1, T_2 \} \le \max\{(1+\vep) t^2/2, -\infty\},
$$
and therefore, by letting $\vep \to 0$
$$
\lim_{k \to \infty} \frac 1{\mu_k^{2 \a}}\log \sum_{x}\P\left(\frac{\nu_k}{\mu_k}=x\right) e^{\frac{x t^2}2 \mu_k^{2\alpha}}
\le t^2/2.
$$
Together with the lower bound above this completes the analysis of the situation where the $(X_i)_i$ are standard normally distributed.

For general random variables $(X_i)_i$ that satisfy the conditions of the theorem we employ our replacement trick again. Note that for the G\"artner-Ellis theorem we need to analyze
$ \E \bigl( e^{t \mu_k^{\alpha-\frac 12 } \sum_{i=1}^{n} X_i} \bigr)$.

Following the lines of the proof of Theorem \ref{iid} one shows that
for i.i.d. standard Gaussian random variables $(Y_i)_i$ and $(X_i)_i$ as in the statement of our theorem one sees that there are constants $C_1, C_2>0$ such for each fixed $n$
$$
\exp(-C_1 t^3 \mu_k^{3 \a -\frac 32 }n)\le \frac{ \E \bigl( e^{t \mu_k^{\alpha-\frac 12 } \sum_{i=1}^{n} X_i} \bigr)}{ \E \bigl(e^{t \mu_k^{\alpha-\frac 12 } \sum_{i=1}^{n} Y_i} \bigr)}\le \exp(C_2 t^3 \mu_k^{3 \a -\frac 32 }n).
$$
Therefore
\begin{eqnarray*}
\E \bigl( e^{t \mu_k^{2\alpha} Z_{k,\a}} \bigr)&=&\E \bigl( e^{t \mu_k^{\alpha-\frac 12 } \sum_{i=1}^{\nu_k} X_i} \bigr)\\
&=& \sum_{n \ge 0}\P(\nu_k=n) \E \bigl( e^{t \mu_k^{\alpha-\frac 12}\sum_{i=1}^n X_i} \bigr)\\
&\le& \sum_{n \ge 0}\P(\nu_k=n) \E \bigl(e^{t \mu_k^{\alpha-\frac 12}\sum_{i=1}^n Y_i} \bigr)  e^{C_2 t^3 \mu_k^{3 \a -\frac 32 }n}
\end{eqnarray*}
as well as
$$
\E \bigl( e^{t \mu_k^{2\alpha} Z_{k,\a}} \bigr) \ge  \sum_{n \ge 0}\P(\nu_k=n) \E \bigl(e^{t \mu_k^{\alpha-\frac 12}\sum_{i=1}^n Y_i} \bigr) e^{-C_1 t^3 \mu_k^{3 \a -\frac 32 }n}.
$$
Also recall that $\E \bigl( e^{t \mu_k^{\alpha-\frac 12}\sum_{i=1}^n Y_i} \bigr)= e^{n\frac {t^2}2 \mu_k^{2\alpha-1}}$.
Again we reparamatrize the sums on the right hand sides of the above displays as
$$
\sum_{n \ge 0}\P(\nu_k=n) \E \bigl( e^{t \mu_k^{\alpha-\frac 12}\sum_{i=1}^n Y_i} \bigr) e^{C_2 t^3 \mu_k^{3 \a -\frac 32 }n} =
\sum_{x \ge 0}\P\left(\frac{\nu_k}{\mu_k}=x\right) e^{\frac{t^2}2 \mu_k^{2\alpha}x} e^{C_2 t^3 \mu_k^{3 \a -\frac 12 }x}
 $$
 and
 $$
\sum_{n \ge 0}\P(\nu_k=n) \E \bigl(e^{t \mu_k^{\alpha-\frac 12}\sum_{i=1}^n Y_i} \bigr) e^{-C_1 t^3 \mu_k^{3 \a -\frac 32 }n}=
\sum_{x \ge 0}\P\left(\frac{\nu_k}{\mu_k}=x\right) e^{\frac{t^2}2 \mu_k^{2\alpha}x} e^{-C_1 t^3 \mu_k^{3 \a -\frac 12 }x},
$$
respectively. As above one shows that asymptotically for any $\vep>0$ the probabilities get concentrated on the ball $B_\vep (1)$:
\begin{eqnarray*}
&&\lim_{k \to \infty}\frac 1{\mu_k^{2\a}}\log \sum_{x \ge 0}\P\left(\frac{\nu_k}{\mu_k}=x\right) e^{\frac{t^2}2 \mu_k^{2\alpha}x+C_2 t^3 \mu_k^{3 \a -\frac 12 }x}\\
&&\hskip 3cm= \lim_{k \to \infty}\frac 1{\mu_k^{2\a}}\log \sum_{x \in B_\vep(1)}\P\left(\frac{\nu_k}{\mu_k}=x\right) e^{\frac{t^2}2 \mu_k^{2\alpha}x+C_2 t^3 \mu_k^{3 \a -\frac 12 }x}
\end{eqnarray*}
as well as
\begin{eqnarray*}
&&\lim_{k \to \infty}\frac 1{\mu_k^{2\a}}\log \sum_{x \ge 0}\P\left(\frac{\nu_k}{\mu_k}=x\right) e^{\frac{t^2}2 \mu_k^{2\alpha}x-C_1 t^3 \mu_k^{3 \a -\frac 12 }x}\\
&&\hskip 3cm= \lim_{k \to \infty}\frac 1{\mu_k^{2\a}}\log \sum_{x \in B_\vep(1)}\P\left(\frac{\nu_k}{\mu_k}=x\right) e^{\frac{t^2}2 \mu_k^{2\alpha}x-C_1 t^3 \mu_k^{3 \a -\frac 12 }x}.
\end{eqnarray*}
On the other hand by what we learned above for the situation with Gaussian summands
\begin{eqnarray*}
&&\limsup_{k \to \infty}\frac 1{\mu_k^{2\a}}\log \sum_{x \in B_\vep(1)}\P\left(\frac{\nu_k}{\mu_k}=x\right) e^{\frac{t^2}2 \mu_k^{2\alpha}x+C_2 t^3 \mu_k^{3 \a -\frac 12 }x}\\
&&\qquad \le \limsup_{k \to \infty} t^2/2+C_2t^3\mu_k^{\a -\frac 12 }(1+\vep)= t^2/2
\end{eqnarray*}
and
\begin{eqnarray*}
&& \liminf_{k \to \infty}\frac 1{\mu_k^{2\a}}\log \sum_{x \in B_\vep(1)}\P\left(\frac{\nu_k}{\mu_k}=x\right) e^{\frac{t^2}2 \mu_k^{2\alpha}x+C_2 t^3 \mu_k^{3 \a -\frac 12 }x}\\
&&\qquad \ge \liminf_{k \to \infty} t^2/2+C_2t^3\mu_k^{\a -\frac 12 }(1-\vep)= t^2/2.
\end{eqnarray*}
And in the same way:
$$
\lim_{k \to \infty}\frac 1{\mu_k^{2\a}}\log \sum_{x \in B_\vep(1)}\P\left(\frac{\nu_k}{\mu_k}=x\right) e^{\frac{t^2}2 \mu_k^{2\alpha}x-C_1 t^3 \mu_k^{3 \a -\frac 12 }x}\to t^2/2.
$$
In view of the G\"artner-Ellis theorem \cite[Theorem 2.3.6 ]{Dembo/Zeitouni:LargeDeviations} this finishes the proof.
\end{proof}

\begin{remark}
Choosing standardized random variables $X_i$ ($a=0$ and $c^2=1$), the proof of Theorem \ref{randsumgauss} is less involved. But along the lines of the proof we would be able to prove a MDP for the {\it non-randomly centered} random sums. Here we have to assume that the sequence
$$
\biggl( \frac{c^2 \nu_k}{c^2 \mu_k+ a^2 \gamma_k^2} \biggr)_k
$$
has to satisfy an LDP with speed $(c^2 \mu_k + a^2 \gamma_k^2)^{\beta}$ with some $\beta > 2 \alpha$ and a rate function with a unique minimum at $x=1$.
Then we would observe that
$$
\frac{1}{(c^2 \mu_k+ a^2 \gamma_k^2)^{\frac 12 + \alpha}} \biggl( \sum_{i=1}^{\nu_k} X_i - a \mu_k \biggr)
$$
as a sequence in $k$ satisfies an MDP with speed $\mu_k^{2 \alpha}$ and rate $x^2/2$ for any fixed $0 < \alpha < \frac 12$.
\end{remark}

\noindent
One of the most relevant examples is the following:
\begin{example}
Suppose that the random variables $(X_i)$ satisfy the assumptions of Theorem \ref{randsumgauss} and that the summation index $\nu_\lambda$ is Poisson distributed with parameter $\lambda >0$ (where we use $\lambda$ instead of $k$ to keep the conventional notation for the parameter of a Poisson distribution). Then, as $\lambda \to \infty$, for any $0<\a<\frac 12$  the random sums $Z_{\lambda,\a} := \frac 1{\lambda^{1/2+\alpha}}\sum_{i=1}^{\nu_\lambda} X_i$ obey an MDP with speed
$\lambda^{2\a}$ and rate function $I(x)=x^2/2$.

Indeed, since $\E \nu_\lambda =\lambda$ all we need to check is that $\nu_\lambda/\lambda$ obey an LDP with a speed faster than $\lambda^{2\alpha}$. This is, however, an immediate consequence of Cram\'er's theorem \cite{Dembo/Zeitouni:LargeDeviations}, Theorem 2.1.24, since (for integer $\lambda$) $\nu_\lambda$ is equal in distribution to a sum of $\lambda$ Poisson variables with parameter 1. Alternatively, the desired property can be computed directly.
\end{example}
\medskip

\subsection{The Gaussian case via cumulants}\label{sec_cum}

Since the late seventies estimations of {\it cumulants} have been studied to investigate a more precise asymptotic analysis of the distribution of certain statistics, see e.g. \cite{SaulisStratulyavichus:1989} and references therein. In \cite{DoeringEichelsbacher:2010} moderate deviation principles are established for a rather general class of random variables fulfilling certain bounds of the cumulants.
Let $X$ be a real-valued random variable with existing absolute moments. Then with $i =\sqrt{-1}$
$$
\left. \Gamma_j := \Gamma_j(X) :=(-i)^j \frac{d^j}{dt^j} \log \E\bigl[e^{i t X}\bigr] \right|_{t=0}
$$
exists for all $j\in\mathbb N$ and the term is called the {\it $j$th cumulant}
(also called semi-invariant) of $X$. The following main theorem was proved in \cite{DoeringEichelsbacher:2010}.

\begin{theorem}\label{hanna}
For any $n \in {\Bbb N}$, let $Z_n$ be a centered random variable with variance one and existing
absolute moments, which satisfies
\begin{equation}\label{eqcumulants}
\bigl| \Gamma_j(Z_n) \bigr| \leq (j!)^{1+\gamma} / \Delta_n^{j-2}
\quad\text{for all } j=3,4, \dots
\end{equation}
for fixed $\gamma\geq 0$ and $\Delta_n>0$.
Let the sequence $(a_n)_{n \geq 1}$ of real numbers grow to infinity, but slow enough such that
$$
\frac{a_n}{\Delta_n^{1/(1+2\gamma)}} \stackrel{n\to\infty}{\longrightarrow} 0
$$
holds.
Then the moderate deviation principle for
$\bigl(\frac{1}{a_n} Z_n\bigr)_n$
with speed $a_n^2$ and rate function $I(x)=\frac{x^2}{2}$
holds true.
\end{theorem}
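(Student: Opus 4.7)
The plan is to verify the hypotheses of the Gärtner-Ellis theorem for the sequence $(Z_n/a_n)_n$ at speed $a_n^2$: concretely, to show that for every $t \in \R$
$$\lim_{n \to \infty} \frac{1}{a_n^2} \log \E\bigl[\exp(t\, a_n Z_n)\bigr] = \frac{t^2}{2},$$
since $t^2/2$ is finite and differentiable in $t$ and its Legendre-Fenchel transform is $x^2/2$. The convergence above, combined with $\E(Z_n) = 0$ and $\V(Z_n) = 1$, mimics the role played by the limit $\Lambda(t) = t^2/2$ in Theorem \ref{iid}, so the structural argument is entirely analogous once the cumulant hypothesis replaces independence.

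The key analytical input is a classical lemma from \cite{SaulisStratulyavichus:1989} on the logarithm of the moment generating function of a random variable whose cumulants satisfy the Bernstein-type bound \eqref{eqcumulants}. That lemma asserts the existence of absolute constants $c, K$ depending only on $\gamma$ such that for every $n$ and every $s$ with $|s| \leq c\, \Delta_n^{1/(1+2\gamma)}$ the MGF of $Z_n$ is finite and
$$\log \E\bigl[\exp(s Z_n)\bigr] = \frac{s^2}{2} + R_n(s), \qquad |R_n(s)| \leq K\, \frac{|s|^3}{\Delta_n^{1/(1+2\gamma)}}.$$
Granted this, the proof of Theorem \ref{hanna} is immediate: set $s = t a_n$, which lies in the admissible range for all $n$ large enough by the assumption $a_n / \Delta_n^{1/(1+2\gamma)} \to 0$, and divide by $a_n^2$ to obtain
$$\frac{1}{a_n^2} \log \E\bigl[\exp(t a_n Z_n)\bigr] = \frac{t^2}{2} + \frac{R_n(t a_n)}{a_n^2} = \frac{t^2}{2} + O\!\left(\frac{|t|^3\, a_n}{\Delta_n^{1/(1+2\gamma)}}\right),$$
and the error vanishes as $n \to \infty$. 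Gärtner-Ellis (\cite[Theorem 2.3.6]{Dembo/Zeitouni:LargeDeviations}) then delivers the MDP with speed $a_n^2$ and rate function $I(x) = x^2/2$.

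The main obstacle is the Saulis-Statulyavichus lemma itself. When $\gamma > 0$ the formal series $\sum_j \Gamma_j(Z_n)\, s^j / j!$ has radius of convergence zero, so one cannot naively sum it. The proof of the lemma instead truncates at an optimal index $j_0$ of order $\Delta_n^{1/(1+2\gamma)}$, writes moments as sums over set partitions of cumulants, controls the partial sums by $(j!)^\gamma \leq j^{j\gamma}$ via Stirling, and bounds the tail using a residual estimate for $\log \E[\exp(sZ_n)]$ that is uniform on the prescribed disk. Since this lemma is available in the literature, I would simply invoke it; the remainder of the argument is the cosmetic substitution $s = t a_n$ followed by Gärtner-Ellis as above.
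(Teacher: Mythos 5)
There is a genuine gap, and it sits exactly where you place all the weight: the ``classical lemma'' you invoke does not exist in the form you state it, and is in fact false for $\gamma>0$. Condition \eqref{eqcumulants} with $\gamma>0$ permits moment growth of order $(j!)^{1+\gamma}$, i.e.\ stretched-exponential tails $\exp(-c\,x^{1/(1+\gamma)})$, and such a random variable has $\E\bigl(e^{sZ_n}\bigr)=\infty$ for \emph{every} $s\neq 0$. Concretely, let $W$ be symmetric with density proportional to $\exp(-|x|^{1/(1+\gamma)})$, standardized to variance one; then $|\Gamma_j(W)|\le (j!)^{1+\gamma}H^{j-2}$ for some $H>0$ (this is the canonical example in \cite{SaulisStratulyavichus:1989}), so $Z_n:=n^{-1/2}\sum_{i=1}^n W_i$ satisfies \eqref{eqcumulants} with $\Delta_n$ of order $\sqrt n$, since $\Gamma_j(Z_n)=n^{1-j/2}\Gamma_j(W)$. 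Yet
$$
\E\bigl(e^{sZ_n}\bigr)=\Bigl(\E\bigl(e^{s W/\sqrt n}\bigr)\Bigr)^{n}=\infty \quad\text{for all } s\neq 0,
$$
because $\int e^{ux-x^{1/(1+\gamma)}}\,dx=\infty$ whenever $\gamma>0$. So the quantity $\log\E\bigl(\exp(t a_n Z_n)\bigr)$ you propose to expand is identically $+\infty$, G\"artner--Ellis is unusable at this level, and your final paragraph's repair (truncation of the cumulant series plus ``a residual estimate for $\log\E[\exp(sZ_n)]$ uniform on the disk'') presupposes precisely the finiteness that fails. Note that the theorem itself is consistent with this: the MDP only probes deviations $x\asymp a_n\ll \Delta_n^{1/(1+2\gamma)}$, inside the zone of normal attraction, and says nothing about the heavier tails beyond it, so it cannot force exponential integrability. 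For $\gamma=0$ your argument is essentially correct --- there \eqref{eqcumulants} is equivalent to Cram\'er's/Bernstein's condition, the MGF is analytic on a disk of radius $\sim\Delta_n$, and the expansion you quote holds --- but the whole point of the statement is to cover $\gamma>0$.

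For the record, the paper does not prove Theorem \ref{hanna} at all; it cites \cite{DoeringEichelsbacher:2010}, whose proof bypasses moment generating functions entirely. The correct classical input from \cite{SaulisStratulyavichus:1989} is not an MGF expansion but the lemma on large deviations relative to the normal law (Lemma 2.3 there): under \eqref{eqcumulants}, with $\Delta_{n,\gamma}:=c(\gamma)\,\Delta_n^{1/(1+2\gamma)}$ one has, for $0\le x< \Delta_{n,\gamma}$,
$$
\frac{\P(Z_n\ge x)}{1-\Phi(x)}=\exp\bigl(L_n(x)\bigr)\Bigl(1+\mathcal{O}\Bigl(\tfrac{x+1}{\Delta_{n,\gamma}}\Bigr)\Bigr),
\qquad |L_n(x)|=\mathcal{O}\Bigl(\frac{x^3}{\Delta_{n,\gamma}}\Bigr),
$$
a statement proved via characteristic functions and truncated cumulant expansions, which exist unconditionally. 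Taking $x=r a_n$, using $a_n/\Delta_n^{1/(1+2\gamma)}\to 0$ and $\log(1-\Phi(ra_n))\sim -r^2a_n^2/2$, one gets $\frac{1}{a_n^2}\log\P(Z_n\ge ra_n)\to -r^2/2$, the symmetric estimate for the lower tail, and the full MDP on $\R$ is then assembled from the two tail bounds --- the same assembly step the paper itself performs elsewhere via Lemma 4.4 of \cite{HuangLiu:2012}. So your overall architecture (reduce to a one-parameter asymptotic, then a general principle) is sound, but the one-parameter asymptotic must be the tail ratio above, not the scaled cumulant generating function.
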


In \cite{DeltuvieneSaulis:2007} upper estimates for cumulants of the standardized random sum in the case where the i.i.d. random variables $(X_i)_i$
satisfy Bernstein's condition and the random variables $(\nu_k)_k$ satisfy an additional assumption for its cumulants. The result reads as follows.
Assume that $\E(X_1)=0$ and $c^2 = \V(X_1)$ and assume that there exist nonnegative numbers $K_1, K_2$ such that
\begin{equation} \label{cum1}
|\E(X_1)^j| \le j! K_1^{j-2} c^2\qquad j=3,4, \ldots
\end{equation}
and that %\blau{wie schon geschrieben, kann ich die Bedingung in der Arbeit nicht finden}
\begin{equation} \label{cum2}
|\Gamma_j(\nu_k)| \leq j! K_2^{j-1} \mu_k \qquad j=1,2,\ldots, k \geq 1,
\end{equation}
where again $\mu_k = \E(\nu_k)$. %Remember that with Blackwell-Girshick we have  $d_k^2 = c^2 \, \mu_k$
Then in \cite{DeltuvieneSaulis:2007} it was proved that the cumulants of $(Z_{k,0})_k$ can be bounded
as following:
$$
|\Gamma_j(Z_{k,0})| \leq  \frac{j!}{\Delta_k^{j-2}} \qquad j= 3,4, \ldots.
$$
with $\Delta_k := C \sqrt{\mu_k}$ and $C$ is an explicit constant depending on $c^2, K_1, K_2$. See also \cite{Kas2013} and \cite[Lemma 2.2]{KasDiss},
%\blau{Hier fehlen noch die Zitate, kannst du mir mal das neuest bibtex-file schicken?}
where the case of weighted random sums was considered.

Applying Theorem \ref{hanna} the result in Theorem \ref{randsumgauss} follows, that is an MDP
for the sequence of random variables $(Z_{k,\alpha})_k$ holds, under conditions \eqref{cum1} and \eqref{cum2}.

Here condition \eqref{cum1}
is a Bernstein type condition for the distribution $X_1$ which is equivalent to Cram\'er's condition \eqref{finmomgen}. The second condition \eqref{cum2}
is a concentration property of $(\nu_k)_k$, which seems to be different from Assumption \ref{firstass}. With \eqref{cum2} we have that for $k$ large enough
$$
\bigg| \Gamma_j \bigl( \frac{\nu_k}{\sqrt{\mu_k}} \bigr) \bigg| \leq j!  \frac{K_2^{j-1}}{\mu_k^{j/2-1}}  \leq j! \frac{1}{\Delta_k^{j-2}} \qquad j=2, 3, \ldots
$$
with $\Delta_k := C \sqrt{\mu_k}$ and $C$ is an explicit constant depending on $K_2$.
Note that as a consequence of Theorem \ref{hanna}
we obtain that assumption \eqref{cum2} implies, that the sequence
$$\biggl( \frac{\nu_k - \mu_k}{\mu_k^{\alpha + \frac 12}} \biggr)_k
$$
satisfies an MDP for any $0< \alpha < 1/2$  with speed $\mu_k^{2 \alpha}$ and Gaussian rate function $x^2/2$, which is indeed different form the LDP assumption
in Assumption \ref{firstass}.

\subsection{Strongly Fluctuating Summation Variables}
In this section we will analyse the moderate deviations behaviour of random sums, where the summation variable does not have the strong concentration properties required in Assumption \ref{firstass}, while for the $(X_i)_i$ we will still assume that they are i.i.d. centred random variables with variance 1, such that \eqref{finmomgen} holds. Naturally, the fluctuations of a random sum on a moderate deviations scale is then fed by two sources: the fluctuation of the $(X_i)_i$ on the one hand, and the fluctuations of the summation index. The latter could, in principle, however be arbitrarily strong. To get some control over it, we have to make an assumption:

\begin{ass}\label{secondass}
Again let $\mu_k$ be the expectation of $\nu_k$ and $0<\a <\frac 12$ be fixed.
Assume that $\mu_k \to \infty$ as $k\to \infty$ and there exists $0<\gamma\le 2\a$ such that
the sequence of random variables $\bigl( \nu_k/\mu^{1+2\a-\gamma }_k\bigr)_k$ obey an LDP with speed $\mu_k^\gamma$, and a convex rate function $I$ such that
\begin{equation}\label{decay_rate}
\lim_{y \to \infty} I(y)=\infty.
\end{equation}
Alternatively, by setting $1+2\a-\gamma=\beta$ we can assume that there exists $1\le \beta < 1+ 2\a$, such that the sequence of random variables $\bigl( \nu_k/\mu^{\b}_k\bigr)_k$ obey an LDP with speed $\mu_k^{1+ 2\a -\beta}$, and a convex rate function $I$ such
that \eqref{decay_rate} is satisfied.
\end{ass}

Now we are ready to formulate our second theorem on the moderate deviations for random sums.
\begin{theorem}\label{randsum-nongauss}
Again suppose that the $(X_i)_i$ are i.i.d. centered random variables with variance one and that \eqref{finmomgen} is satisfied. Moreover assume that the random summation index satisfies Assumption \ref{secondass}. Then, for $0< \alpha <1/2$, the sequence of random variables $Z_{k,\alpha}$ defined as in \eqref{Zka} satisfies an MDP with speed $\mu_k^\gamma$ and rate function
$$
J(y)=\inf\{y^2/2s+I(s):s \in \R^+\}.
$$
\end{theorem}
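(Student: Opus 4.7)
The plan is to apply the G\"artner--Ellis theorem \cite[Theorem 2.3.6]{Dembo/Zeitouni:LargeDeviations}, for which we must identify
\[
\Lambda(t) := \lim_{k\to\infty}\frac{1}{\mu_k^\gamma}\log\E\bigl(e^{t\mu_k^\gamma Z_{k,\alpha}}\bigr)
\]
and then show that its Legendre--Fenchel transform is $J$. As in the proof of Theorem \ref{randsumgauss} I first treat the case where the $(X_i)_i$ are i.i.d.\ standard normal. Conditioning on $\nu_k$ and using Gaussian integration gives
\[
\E\bigl(e^{t\mu_k^\gamma Z_{k,\alpha}}\bigr)=\E\bigl(e^{(t^2/2)\,\nu_k\,\mu_k^{2\gamma-1-2\alpha}}\bigr)=\E\bigl(e^{(t^2/2)\,\mu_k^\gamma\,s_k}\bigr),
\]
where $s_k:=\nu_k/\mu_k^{1+2\alpha-\gamma}$. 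By Assumption \ref{secondass}, $(s_k)_k$ obeys an LDP with speed $\mu_k^\gamma$ and convex good rate function $I$ satisfying \eqref{decay_rate}. A Varadhan--type argument---mimicking the split of \eqref{zweisumm} in the proof of Theorem \ref{randsumgauss} into a small ball $B_\eps(s^\ast)$ around the maximizer $s^\ast$ of $s\mapsto t^2 s/2-I(s)$ and its complement, the latter handled by \cite[Lemma 2.2]{Varadhan:StFlour} together with the coercivity \eqref{decay_rate}---then yields
\[
\Lambda(t)=\sup_{s\in\R^+}\Bigl\{\tfrac{t^2 s}{2}-I(s)\Bigr\}.
\]

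For general $(X_i)_i$ satisfying \eqref{finmomgen}, I would invoke the Lindeberg replacement trick already developed in the proof of Theorem \ref{randsumgauss}: for fixed $n$, the ratio of the mgf with the $(X_i)_i$ to that with i.i.d.\ standard normal $(Y_i)_i$ lies in $[\exp(-Ct^3\mu_k^{3(\gamma-1/2-\alpha)}n),\exp(Ct^3\mu_k^{3(\gamma-1/2-\alpha)}n)]$. Summing against $\P(\nu_k=n)$ and reparametrizing through $s_k$ absorbs this into an additional exponent $\pm Ct^3\mu_k^\gamma\cdot\mu_k^{\gamma-1/2-\alpha}s_k$. Since $\gamma\le 2\alpha$ and $\alpha<1/2$ force $\gamma-1/2-\alpha<0$, on $B_\eps(s^\ast)$ this correction is $o(\mu_k^\gamma)$ uniformly, so exactly the same Varadhan argument as above produces the same $\Lambda(t)$.

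Finally, the rate function provided by G\"artner--Ellis is $\Lambda^\ast(y)=\sup_t\{ty-\Lambda(t)\}$. Since $I$ is convex, the saddle function $(t,s)\mapsto ty-t^2 s/2+I(s)$ is concave in $t\in\R$ and convex in $s\in\R^+$, and a minimax interchange yields
\[
\Lambda^\ast(y)=\sup_t\inf_{s\in\R^+}\Bigl\{ty-\tfrac{t^2 s}{2}+I(s)\Bigr\}=\inf_{s\in\R^+}\Bigl\{\frac{y^2}{2s}+I(s)\Bigr\}=J(y),
\]
the inner sup in $t$ being $y^2/(2s)$, attained at $t=y/s$. The main obstacle I anticipate is the Varadhan/G\"artner--Ellis step when $I$ has only linear growth: in that regime $\Lambda(t)=+\infty$ for $|t|$ large, and the full lower bound must be obtained either by checking that every $y\in\R$ is an exposed point of $\Lambda^\ast$ (using the strict convexity built into $y\mapsto y^2/(2s)$), or by a direct tilting argument that simultaneously modifies the laws of $\nu_k$ and of the $(X_i)_i$.
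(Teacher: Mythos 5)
Your route through G\"artner--Ellis/Varadhan has a genuine gap, and it is exactly the one you flag at the end --- but neither of your proposed repairs works, and the obstruction bites precisely in the paper's flagship example. First, Assumption \ref{secondass} gives only an LDP for $\nu_k/\mu_k^\beta$ with a coercive convex rate function; it does \emph{not} imply the moment condition (4.3.2)/(4.3.3) of \cite{Dembo/Zeitouni:LargeDeviations} needed for the Varadhan upper bound, and indeed it does not even guarantee that $\E\bigl(e^{t\mu_k^\gamma Z_{k,\alpha}}\bigr)<\infty$: an LDP at speed $\mu_k^\gamma$ constrains the mass of $\nu_k$ only on sets of scale $\mu_k^\beta$ as $k\to\infty$, while for each fixed $k$ the far tail of $\nu_k$ may be heavy enough (e.g.\ stretched-exponential) that $\E\bigl(e^{a\nu_k}\bigr)=\infty$ for every $a>0$. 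This is why, in Section 5, the paper must add Assumption \ref{thirdass} (exactly the Varadhan moment conditions) before running your kind of argument in the LDP regime; no such hypothesis is available here. Second, even where $\Lambda(t)=\sup_s\{t^2s/2-I(s)\}$ can be established, essential smoothness of $\Lambda$ fails whenever $I$ grows only linearly. Take the geometric case, where $I(s)=s$: then $\Lambda\equiv 0$ on $[-\sqrt2,\sqrt2]$ and $\Lambda\equiv+\infty$ outside, so $\Lambda^\ast(y)=\sqrt2\,|y|$ is \emph{affine} on each half-line. An affine rate function has no exposed points with exposing hyperplane in the interior of $\{\Lambda<\infty\}$ (the candidate slope $\sqrt2$ neither exposes strictly nor lies in the interior), so the G\"artner--Ellis lower bound is vacuous for every $y\neq 0$. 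Your first fix --- strict convexity of $y\mapsto y^2/(2s)$ --- does not survive the infimum over $s$: in this very example $J(y)=\sqrt2\,y$ is affine. Your second fix, a ``direct tilting argument,'' is not carried out, and making it work essentially forces you out of the mgf framework.

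The paper sidesteps both problems by never touching exponential moments of $Z_{k,\alpha}$. It estimates the upper and lower tail probabilities $\P(Z_{k,\alpha}\ge t)$ directly, via the three-way decomposition \eqref{zerlegung} over $\{\nu_k\le c\mu_k^\beta\}$, $\{c\mu_k^\beta\le\nu_k\le C\mu_k^\beta\}$ and $\{\nu_k> C\mu_k^\beta\}$: the extreme pieces are killed by a union bound with Gaussian tails (resp.\ by the LDP of Assumption \ref{secondass} with convexity of $I$), the middle piece is localized at each $s\in[c,C]$ and evaluated using Gaussian tail estimates together with the LDP for $\nu_k/\mu_k^\beta$, and finally $c\to0$, $C\to\infty$. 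For general summands the Gaussian tail bounds are replaced by the fixed-index MDP of Theorem \ref{iid} (this is the only place the Lindeberg input enters --- not as an mgf-ratio bound as in your sketch), and the passage from one-sided tail asymptotics to a full MDP on $\R$ is done by Lemma 4.4 of \cite{HuangLiu:2012}. If you want to salvage your approach, you would have to add integrability hypotheses on $\nu_k$ in the spirit of Assumption \ref{thirdass} \emph{and} restrict to rate functions $I$ of superlinear growth, which would exclude the geometric example the theorem is designed to cover.
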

\begin{remark}
Note that the behavior of $(Z_{k,\a})_k$ under Assumption \ref{secondass} is essentially different from the behavior in the previous section, which e.g. can already be seen from different speed we obtain. Moreover, even if we set
$\gamma= 2\a$ in Assumption \ref{secondass} (which formally brings us into the realm of Assumption \ref{firstass}) we reobtain the speed of Theorem \ref{randsumgauss} but it is not obvious that we have the same rate function.
\end{remark}

\begin{proof}
In the spirit of the replacement trick we start with the case, where the $(X_i)_i$ are i.i.d. standard Gaussian random variables.
We will first estimate
$\P(Z_{k,\a} \ge t)$ for $t \in \R$.
Clearly,
\begin{eqnarray}\label{zerlegung}
\P[Z_{k,\a} \ge t]
&=&\P \bigl( \{Z_{k,\a} \ge t\}\cap\{\nu_k \le c \mu_k^\beta\} \bigr) \nonumber \\
&+&\P \bigl( \{Z_{k,\a} \ge t \}\cap\{ c \mu_k^\beta \le \nu_k \le C \mu_k^\beta\} \bigr)\\
&+&\P\bigl( \{Z_{k,\a} \ge t \} \cap\{ \nu_k > C \mu_k^\beta\} \bigr)\nonumber
\end{eqnarray}
for some constants $1>c>0$ (small), $C>1$ (large) and again with $1+2\a-\gamma=\beta$. Not unexpectedly the main contribution will come from the middle summand on the right hand side in \eqref{zerlegung}.
Indeed, for the first summand we obtain
\begin{equation}\label{lower_tail}
\P \bigl( \{Z_{k,\a} \ge t\}\cap\{\nu_k \le c \mu_k^\beta\} \bigr) \le \P\left(\bigcup_{n=1}^{c \mu_k^\beta} \left\{\sum_{i=1}^n X_i \ge t \mu^{\a+\frac 12}_k\right\}\right) \le c\mu_k^\beta e^{-\frac {t^2} {2c} \mu_k^\gamma}.
\end{equation}
In fact, by a standard estimate for Gaussian random variables, the probabilities $\P\left(\sum_{i=1}^n X_i \ge t \mu^{\a+\frac 12}_k\right)$ are bounded from above by $e^{-\frac {t^2} {2n} \mu_k^{2\alpha+1}}$ which is increasing in $n$. The right hand side in \eqref{lower_tail} is this obtained by a simple union bound and the plugging in the relation between $\beta$ and $\gamma$. On the other hand Assumption \ref{secondass} provides us with the estimate
$$
P \bigl( \{Z_{k,\a} \ge t\} \cap\{ \nu_k > C \mu_k^\beta\} \bigr)  \le e^{-\mu_k^\gamma (I(C)+\vep)}
$$
for the third summand, for every $\vep>0$, if only $k$ is large enough. This is an immediate consequence of the assumed LDP for $\bigl( \nu_k / \mu_k^\beta \bigr)_k$ and the convexity of $I$. Therefore, altogether we see that
\begin{eqnarray*}
&&\P \bigl( \{Z_{k,\a} \ge t\} \cap\{ c \mu_k^\beta \le \nu_k \le C \mu_k^\beta\} \bigr) \\&&\hskip-1cm\le \P \bigl( Z_{k,\a} \ge t \bigr) \\ &&\hskip-1cm \le
\P \bigl( \{Z_{k,\a} \ge t\} \cap\{ c \mu_k^\beta \le \nu_k \le C \mu_k^\beta\} \bigr) +c\mu_k^\beta e^{-\frac {t^2}{2c} \mu_k^\gamma}+e^{-\mu_k^\gamma (I(C)+\vep)}.
\end{eqnarray*}
Now for the central term observe that
\begin{eqnarray*}
\P \bigl( \{Z_{k,\a} \ge t\} \cap\{ c \mu_k^\beta \le \nu_k \le C \mu_k^\beta\} \bigr)&=&\sum_{n=c \mu_k^\beta}^{C \mu_k^\beta} \P\left(\sum_{i=1}^{n} X_i \ge t \mu_k^{\a +\frac 12}\right)\P(\nu_k=n)\\
&=& \sum_{s=c}^{C} \P\left(\sum_{i=1}^{s \mu_k^\b} X_i \ge t \mu_k^{\a +\frac 12}\right)\P\left(\frac{\nu_k}{\mu_k^\beta}=s\right)
\end{eqnarray*}
where again the first sum is over all $s$ in the image of $\frac{\nu_k}{\mu_k^\beta}$.
Therefore, for all $0<c<C<\infty$
\begin{eqnarray*}
\max_{s \in [c, C]}\P\left(\sum_{i=1}^{s \mu_k^\b} X_i \ge t \mu_k^{\a +\frac 12}\right)\P\left(\frac{\nu_k}{\mu_k^\beta}=s\right)&\le& \P \bigl(\{Z_{k,\a} \ge t \}\cap\{ c \mu_k^\beta \le \nu_k \le C \mu_k^\beta\} \bigr)\\
&& \hskip-2cm \le  (C-c) \max_{s \in [c, C]}\P\left(\sum_{i=1}^{s \mu_k^\b} X_i \ge t \mu_k^{\a +\frac 12}\right)\P\left(\frac{\nu_k}{\mu_k^\beta}=s\right)
\end{eqnarray*}
%da stand der afsktor mu_k^\beta auf der rechten Seite???
We therefore see that
\begin{eqnarray*}
&&\lim_{k \to \infty} \frac 1 {\mu_k^\gamma}\log  \P \bigl( \{Z_{k,\a} \ge t\} \cap\{ c \mu_k^\beta \le \nu_k \le C \mu_k^\beta\} \bigr) \\
&&\hskip2cm=
\lim_{k \to \infty} \frac 1 {\mu_k^\gamma}\log\max_{s \in [c, C]}\P\left(\sum_{i=1}^{s \mu_k^\b} X_i \ge t \mu_k^{\a +\frac 12}\right)\P\left(\frac{\nu_k}{\mu_k^\beta}=s\right).
\end{eqnarray*}
On the other hand, for any $s\in[c,C]$, by the fact that the $(X_i)_i$ are Gaussians, and therefore $\sum_{i=1}^{s \mu_k^\b} X_i$ is
 $\mathcal{N}(0, s\mu_k^\b)$-distributed, standard estimates for Gaussian tail probabilities and the large deviation assumption on $(\nu_k)_k$, Assumption \ref{secondass}, we see that
$$
\limsup_{k \to \infty} \frac 1 {\mu_k^\gamma}\log\P\left(\sum_{i=1}^{s \mu_k^\b} X_i \ge t \mu_k^{\a +\frac 12}\right)\P\left(\frac{\nu_k}{\mu_k^\beta}=s\right)
\le -\frac{t^2}{2s}-I(s),
$$
since the set $\{s\}$ is closed.

For a matching lower bound, we argue locally (as is typical in large deviation theory). For each $s \in [c, C]$ and $\vep>0$ choose an open neighborhood $(s-\delta, s+\delta)$, $\delta>0$, such that $\inf_{t \in (s-\delta, s+\delta)} I(t) \ge I(s)+\vep$ (which is possible due to the lower semi-continuity of $I$). Then
$$
\P\bigl(\{Z_{k,\a} \ge t\} \cap\{ c \mu_k^\beta \le \nu_k \le C \mu_k^\beta\}\bigr)\ge  \P\left(\sum_{i=1}^{s \mu_k^\b} X_i \ge t \mu_k^{\a +\frac 12}\right)\P\left(\frac{\nu_k}{\mu_k^\beta}\in (s-\delta, s+\delta)\right).
$$
Again using the Gaussian tails we see that
\begin{eqnarray*}
\liminf_{k} \frac 1 {\mu_k^\gamma}\log\P\left(\sum_{i=1}^{s \mu_k^\b} X_i \ge t \mu_k^{\a +\frac 12}\right)\P\left(\frac{\nu_k}{\mu_k^\beta}\in (s-\delta, s+\delta)\right)
&\ge& \hspace{-0.3cm}  -\frac{t^2}{2s}-\inf_{t \in (s-\delta, s+\delta)}I(t)\\
& \ge& \hspace{-0.3cm}- \frac{t^2}{2s}-I(s) -\vep.
\end{eqnarray*}
As $\vep >0$  was arbitrary we obtain that
$$
\liminf_{k \to \infty} \frac 1 {\mu_k^\gamma}\log\P\left(\sum_{i=1}^{s \mu_k^\b} X_i \ge t \mu_k^{\a +\frac 12}\right)\P\left(\frac{\nu_k}{\mu_k^\beta}\in (s-\delta, s+\delta)\right)
\ge -\frac{t^2}{2s}-I(s).
$$
Since also $s$ was arbitrarily chosen in $[c,C]$ we arrive at
$$
\lim_{k \to \infty} \frac 1 {\mu_k^\gamma}\log  \P\bigl( Z_{k,\a} \ge t \cap\{ c \mu_k^\beta \le \nu_k \le C \mu_k^\beta\}\bigr)=
-\inf_{s\in[c,C]}  \{t^2/2s+I(s)\}.
$$
Putting things together we see that for each choice of $0<c<C<\infty$ we have
\begin{eqnarray*}
&&\lim_{k \to \infty} \frac 1 {\mu_k^\gamma}\log  \P\bigl( Z_{k,\a}\ge t \bigr)\\
&=&\max\{-\inf_{s\in[c,C]} \{t^2/2s+I(s)\},\lim_{k \to \infty} \frac 1 {\mu_k^\gamma}\log ( c\mu_k^\beta e^{-\frac {t^2} 2 \mu^\gamma}), -I(C)+\vep\}.
\end{eqnarray*}
Letting $c \to 0 $ and at the same time $C \to \infty$ yields the desired result for the upper tails when the summands are standard Gaussians.

The lower tail probabilities $\P\bigl( Z_{k,\a}\ge t \bigr)$ are analyzed in exactly the same way, by intersecting again with the events that $\nu_k$ is smaller, larger or about its expected value. Due to the topological structure of $\R$ the lower and upper tail probabilities suffice to give the asserted MDP, see for example Lemma 4.4 in \cite{HuangLiu:2012}.

If now the $(X_i)_i$ are no longer standard Gaussian random variables, but satisfy the assumptions in the theorem, one again analyses the upper and lower tail probabilities in the same way as in the Gaussian situation.
Indeed, e.g. for the upper tail probabilities the decomposition \eqref{zerlegung} stays the same. The third summand again is dominated by $e^{-\mu_k^\gamma (I(C)+\vep)}$ for any $\vep>0$, while the second summand for any $\vep>0$ and $k$ large enough now is majorised by $c\mu_k^\beta e^{-(\frac {t^2} 2 -\vep) \mu_k^\gamma}$. Here instead of the bound for the tail of Gaussian random variables for the sum of the $X_i$ one now employs the corresponding MDP, Theorem \ref{iid}. Analogously, for the middle summand in \eqref{zerlegung} one replaces the Gaussian estimates by Theorem \ref{iid} and gets the MDP for the upper tail probabilities. Again, the lower tail probabilities are treated in the same way. This proves the theorem.
\end{proof}

\begin{example}
The following example is central, not only for this section but also generally in random summation, e.g. basically the entire book by Kalashnikov \cite{Kalashnikov:1997} is devoted to variants of this example. Note however, that there mainly situations with positive summands are considered.

Here we will assume that the summation index $\nu_k$ is geometrically distributed, while the
$(X_i)_i$ still are i.i.d. centered random variables with variance one, such that \eqref{finmomgen} is satisfied. In order to keep the conventional notation we will rename the index of our summation variable and call it $p$. Hence we will assume that $\nu_p$ is geometrically distributed with parameter $p$, such that
$\mu_p:=\E \nu_p=\frac 1p$ and we will consider the situation where $p$ converges to 0.
It is well known (see e.g. \cite{Dobler:2013}), Theorem 3.4) that then $Z_{p,0}:=\frac 1 {\sqrt p} \sum_{i=1}^{\nu_p}X_i$ converges in distribution to a
$\mathop{Laplace}(0, 1/\sqrt 2)$-distribution. Recall that the $\mathop{Laplace}(a, b)$-distribution is absolutely continuous and its Lebesgue density
$f_{a,b}:\R \to \R$ is given by
$$
f_{a,b}(x):= \frac 1{2b} \exp\left(-\frac{|x-a|}{b}\right).
$$
For the moderate deviations we will consider the random variables
$$
Z_{p,\a}:= \frac 1 {p^{\frac 12 +\a}} \sum_{i=1}^{\nu_p}X_i
$$
for any $0 < \a < \frac 12$.
We will see that we are exactly in the situation described in Theorem \ref{randsum-nongauss}. To this end, all we need to check is that Assumption \ref{secondass} is satisfied by the geometrically distributed random variable $\nu_p$. This is indeed the case. It is easy matter to check, that for any $x>0$
$$
\P\left(\nu_p \ge \left(\frac 1 p\right)^{1+\a}x\right) \sim \exp\left( \left(\frac 1 p\right)^\a x\right),
$$
i.e. Assumption \ref{secondass} is satisfied with $\gamma=\alpha$ and $I(x)=x$. According to Theorem \ref{randsum-nongauss}  we thus obtain that for all $0<\a <\frac 12 $ the family of random variables $(Z_{p,\a})_p$ obey an MDP (or LDP) with speed $p^{-\a}$ and rate function
$$
J(y)=\inf\{y^2/2s+I(s), s \in \R^+\}.
$$
It is interesting to compute this rate function explicitly. Standard analysis shows that the infimum is attained for $s=y/\sqrt 2$ which gives
$J(y)= \sqrt 2 y$. This is quite satisfactory, since $-\sqrt 2 y$ exactly describes the asymptotics of $f_{0, 1/\sqrt 2}(x)$ for large $x$ on a logarithmic scale. In other words our random sums with geometrically distributed summation index confirm the folklore in MDP theory, that (up to a minus sign) the rate function of an MDP behaves like the asymptotic expansion of the logarithmic density in a CLT for large values of $x$. In our case, this is particularly nice, as the Central Limit Theorem is non-standard, i.e. the limit distribution is not the Gaussian distribution and therefore the moderate deviation rate function is non-quadratic. Let us summarise this central example:
\begin{theorem}
Let $(X_i)_i$ be a sequence of  i.i.d. centered random variables with variance one, such that \eqref{finmomgen} is satisfied. Let
$\nu_p$ be geometrically distributed with parameter $p$, such that $\mu_p:=\E \nu_p=\frac 1p$. Then  $(Z_{p,\a})_p$ satisfies a MDP with speed with speed $p^{-\a}$ and rate function $J(y) = \sqrt 2 y$.
\end{theorem}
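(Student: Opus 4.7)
The plan is to specialize Theorem \ref{randsum-nongauss} to the geometric summation index and then evaluate the resulting variational rate function explicitly. With $\mu_p = 1/p$ and the admissible choice $\beta := 1+\alpha$, $\gamma := \alpha$ (so that $1+2\alpha-\gamma = \beta$ and $\gamma \le 2\alpha$ for $\alpha \in (0,1/2)$), the first task is to verify Assumption \ref{secondass} for $(\nu_p)_p$.

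The key computation uses the geometric tail $\P(\nu_p \ge n) = (1-p)^{n-1}$. For each $x > 0$,
\[
p^{\alpha}\log \P\bigl(\nu_p/\mu_p^{\beta} \ge x\bigr) \;=\; \bigl(xp^{-1} - p^{\alpha}\bigr)\log(1-p)\;\longrightarrow\; -x \quad \text{as } p \to 0,
\]
and a matching lower bound on intervals $(x-\delta,x+\delta)$ follows in the same way; nothing has to be checked at the lower tail since $\nu_p \ge 1$. Hence $(\nu_p/\mu_p^{\beta})_p$ obeys an LDP at speed $\mu_p^{\gamma} = p^{-\alpha}$ with convex rate function $I(s) = s$ for $s \ge 0$ (and $+\infty$ otherwise), which diverges at infinity. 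Assumption \ref{secondass} is thereby met.

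Theorem \ref{randsum-nongauss} then yields an MDP for $(Z_{p,\alpha})_p$ at speed $p^{-\alpha}$ with rate function
\[
J(y) \;=\; \inf_{s > 0}\left\{\frac{y^{2}}{2s} + s\right\}.
\]
A one-variable minimization ($-y^{2}/(2s^{2})+1=0$) locates the minimizer at $s^{\ast} = |y|/\sqrt{2}$, and substitution gives $J(y) = \sqrt{2}\,|y|$, matching the claimed form $\sqrt{2}\,y$ for $y > 0$; the lower tail is handled by the symmetric argument already built into the proof of Theorem \ref{randsum-nongauss}.

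I do not foresee a genuine obstacle here, since Theorem \ref{randsum-nongauss} does the analytic heavy lifting. The only point requiring care is the clean verification that the geometric tails produce the LDP at precisely the speed $p^{-\alpha}$ (and not $p^{-1}$, which would correspond to true large deviations of $\nu_p/\mu_p$), so that the pair $(\beta,\gamma) = (1+\alpha,\alpha)$ actually fits into the framework of Assumption \ref{secondass}. Once this scaling is fixed correctly, the remainder reduces to quoting the preceding theorem and an elementary calculus minimization.
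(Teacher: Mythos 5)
Your proposal is correct and follows essentially the same route as the paper: verify Assumption \ref{secondass} for the geometric index with $\gamma=\alpha$, $\beta=1+\alpha$ and rate function $I(s)=s$ via the tail $\P(\nu_p\ge n)=(1-p)^{n-1}$, then invoke Theorem \ref{randsum-nongauss} and minimize $\inf_{s>0}\{y^2/(2s)+s\}$ at $s^{\ast}=y/\sqrt 2$ to obtain $J(y)=\sqrt 2\, y$. Your explicit computation of $p^{\alpha}\log\P(\nu_p/\mu_p^{\beta}\ge x)\to -x$ merely spells out what the paper leaves as ``easy to check,'' and correctly pins down the speed $p^{-\alpha}$ rather than $p^{-1}$.
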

\end{example}

\section{Large Deviations}
The above results automatically raise the question for the large deviation behavior of random sums. We will start with a situation of the type studied in Section 3. More precisely, we ask for the exponential decay of the probabilities
$\P\bigl( Z_{k,1/2}\ge x \bigr)$ where $Z_{k,1/2}$ is defined by setting $\a=\frac 12$ in \eqref{Zka} and we impose the conditions of Assumption \ref{firstass} on the summation variable.
At the end of the section we will also briefly argue that Section 4 already gives an idea what happens in the situation of Assumption \ref{secondass}.

First of all, Assumption \ref{firstass} needs to be modified. We impose
\begin{ass}\label{fourthass}
Assume that the sequence of random variable $(\nu_k/\mu_k)_k$ obeys an LDP with speed $\mu_k$ with a rate function $I$ that has a unique minimum $I(x)=0$ for $x=1$.
\end{ass}

Also for the $(X_i)_i$  we will need a stronger assumption that relates them to the behaviour of the variables $(\nu_k)_k$
\begin{ass}\label{thirdass}
Suppose that the $(X_i)_i$ are i.i.d. centered random variables that have variance 1 (without loss of generality), and satisfy
\begin{equation} \label{allfinmomgen}
\Lambda_X(t):=\log \E\bigl( \exp(t X_1) \bigr) < \infty
\end{equation}
for all $t>0$.
Moreover assume that the functions $f_t:\R \to \R $ defined by
$$f_t(x):= \Lambda_X(t)x$$
satisfy either
$$
\lim_{M \to \infty} \limsup_{k \to \infty} \frac 1 {\mu_k} \log \E \biggl(e^{\mu_k f_t(\nu_k/\mu_k)}
\mathbbm{1}_{\{ f_t(\frac{\nu_k}{\mu_k})\ge M \}} \biggr)=-\infty
$$
for all $t$, or that for all $t$ there exists some $\gamma >1$ such that
$$
\limsup_{k \to \infty} \frac 1 {\mu_k} \log \E \bigl(e^{\gamma \mu_k f_t(\nu_k/\mu_k)}\bigr)<\infty.
$$
Note that in the above conditions the expectation refers to the random variable $\nu_k$.
\end{ass}

\noindent
Under these conditions we show
\begin{theorem}\label{ldp}
Suppose that Assumption \ref{fourthass} is fulfilled and Assumption \ref{thirdass} is satisfied as well. Then the sequence of random variables $(Z_{k,\frac 12})_k$ satisfies an LDP with speed $\mu_k$ and rate function $J$. $J$ is given by the formula
$$
J(y)= \sup_{\lambda \in \R}[\lambda y - \Gamma(\lambda)]
$$
and $\Gamma$ is given by
\begin{equation}\label{Gamma}
\Gamma(\lambda)=\sup_{x\in \R}[f_\lambda(x)-I(x)].
\end{equation}
\end{theorem}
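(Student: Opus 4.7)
The plan is to apply the Gärtner–Ellis theorem \cite[Theorem 2.3.6]{Dembo/Zeitouni:LargeDeviations} to $(Z_{k,1/2})_k$. The main task is therefore to compute the scaled logarithmic moment generating function and identify its limit as the $\Gamma(\lambda)$ defined in \eqref{Gamma}.

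First I would exploit the independence of $\nu_k$ and $(X_i)_i$. Since $\lambda \mu_k Z_{k,1/2} = \lambda \sum_{i=1}^{\nu_k} X_i$, conditioning on $\nu_k$ and using \eqref{allfinmomgen} yields
\[
\E e^{\lambda \mu_k Z_{k,1/2}} = \E \bigl[ (\E e^{\lambda X_1})^{\nu_k} \bigr] = \E \bigl[ \exp(\nu_k \Lambda_X(\lambda)) \bigr] = \E \bigl[ \exp\bigl(\mu_k\, f_\lambda(\nu_k/\mu_k)\bigr) \bigr].
\]
The crucial observation is that the scale on which $\nu_k/\mu_k$ enters the exponential is precisely $\mu_k$, matching the speed of the LDP imposed in Assumption \ref{fourthass}. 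This is exactly why the LDP exponent does not degenerate in the large deviation regime (in contrast to the moderate deviation regime treated in Section 3).

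Next I would invoke Varadhan's lemma \cite[Theorem 4.3.1]{Dembo/Zeitouni:LargeDeviations} for $(\nu_k/\mu_k)_k$. By Assumption \ref{fourthass} this sequence obeys an LDP with speed $\mu_k$ and rate $I$, and the functional $f_\lambda$ is continuous on $\R$. The two alternatives in Assumption \ref{thirdass} are exactly the two classical integrability hypotheses needed to apply Varadhan's lemma with an unbounded continuous functional: the first is the standard uniform exponential tail condition, the second is the $L^\gamma$-boundedness condition for some $\gamma > 1$. Either of them yields
\[
\lim_{k \to \infty} \frac{1}{\mu_k} \log \E e^{\lambda \mu_k Z_{k,1/2}} = \sup_{x \in \R} \bigl[ f_\lambda(x) - I(x) \bigr] = \Gamma(\lambda).
\]

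Finally, the Gärtner–Ellis theorem will then deliver the desired LDP for $(Z_{k,1/2})_k$ with speed $\mu_k$ and rate $J = \Gamma^*$. The main obstacle is verifying the regularity of $\Gamma$ required by Gärtner–Ellis. Convexity and lower semicontinuity are automatic, since $\Gamma$ is a pointwise supremum of the functions $\lambda \mapsto \Lambda_X(\lambda) x - I(x)$, each of which is convex in $\lambda$ as a scalar multiple of the convex function $\Lambda_X$. Finiteness of $\Gamma$ in a neighbourhood of the origin follows from $\Gamma(0) = -\inf_x I(x) = 0$ together with the finiteness of $\Lambda_X$ near $0$. Essential smoothness should follow from the smoothness of $\Lambda_X$ on the interior of its effective domain together with the convexity of $I$ and the fact that it has a unique minimum at $x=1$, which forces the supremum defining $\Gamma(\lambda)$ to be attained at an interior point so that $\Gamma$ inherits differentiability from $\Lambda_X$. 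Once these properties are secured, Gärtner–Ellis yields both the lower and upper bounds of the LDP with the Legendre transform $J$ as announced.
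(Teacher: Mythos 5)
Your proposal is correct and follows essentially the same route as the paper's own proof: condition on $\nu_k$ to rewrite $\E e^{\lambda \mu_k Z_{k,1/2}} = \E e^{\mu_k f_\lambda(\nu_k/\mu_k)}$, observe that the two alternatives in Assumption \ref{thirdass} are exactly the moment/tail conditions (4.3.2) and (4.3.3) needed for Varadhan's lemma applied to the LDP of Assumption \ref{fourthass}, and conclude via the G\"artner--Ellis theorem. Your additional discussion of the regularity of $\Gamma$ goes beyond the paper (which simply invokes G\"artner--Ellis without these checks) and is a welcome addition; only note that your convexity argument for $\lambda \mapsto x\Lambda_X(\lambda) - I(x)$ requires $x \ge 0$, which is harmless here since $\nu_k/\mu_k \ge 0$ forces $I \equiv +\infty$ on the negative half-line, so the supremum in \eqref{Gamma} is effectively over $x \ge 0$.
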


\begin{proof}
We will again employ the G\"artner-Ellis theorem for this proof. Note however, that the replacement trick of Lindeberg cannot be applied, since the rate functions depends on the distribution of the $X_i$.
Following the computations in Section 3 we see that
\begin{eqnarray*}
\E e^{t \mu_k Z_{k, \frac 12}}&=& \E e^{t\sum_{i=1}^{\nu_k}X_i}\\
&=& \sum_{n=1}^\infty \P(\nu_k=n) (\E e^{t X_1})^n\\
&=& \sum_{x \ge 0} \P\left(\frac{\nu_k}{\mu_k}=x\right) e^{\mu_k \Lambda_X(t)x}\\
&=& \E e^{\mu_k f_t(N_k)},
\end{eqnarray*}
where we have set $N_k=\nu_k/\mu_k$. The right hand side calls for an application of Varadhan's Lemma (Lemma 4.3.1 in \cite{Dembo/Zeitouni:LargeDeviations}). Our Assumption \ref{thirdass} is however just tailor-made for the application of this Lemma (cf. (4.3.2) and (4.3.3.) in \cite{Dembo/Zeitouni:LargeDeviations}). Therefore
$$
\lim_{k \to \infty}\frac 1 {\mu_k} \log \E e^{t \mu_k Z_{k, \frac 12}}=\Gamma(t)
$$
with $\Gamma$ given by \eqref{Gamma}. The theorem thus follows form an application of the G\"artner-Ellis theorem.
\end{proof}

\begin{example}
As a matter of fact, Theorem \ref{ldp} covers many interesting situations. Suppose, e.g. that the $\nu_k$ are Poisson random variables with a parameter $k$ (which then is also equal to $\mu_k$).
Suppose that the $(X_i)_i$ have a finite cumulant generating function $\Lambda_X(t)$, e.g. suppose they are standard Gaussian random variables, in which case  $\Lambda_X(t)=t^2/2$.
Then for all $t$
\begin{eqnarray*}
\E \bigl( e^{\gamma \mu_k f_t(\nu_k/\mu_k)} \bigr)&=& \E \bigl(e^{\gamma \nu_k \frac {t^2} 2}\bigr)= \sum_{n=0}^\infty e^{\gamma n \frac{t^2}2 }\frac {\mu_k^n}{n!} e^{-\mu_k}
= e^{\mu_k(e^{\gamma \frac{t^2}2}-1)},
\end{eqnarray*}
which implies that
$$
\lim_{k \to \infty} \frac 1 {\mu_k} \log \E \bigl(e^{\gamma \mu_k f_t(\nu_k/\mu_k)}\bigr)=e^{\gamma \frac{t^2}2}-1 <\infty
$$
for all $\gamma$ and $t$.
Moreover $(\nu_k/\mu_k)_k$ satisfies an LDP at speed $k=\mu_k$ with rate function $I(x)=1-x+ x\log x$ for nonnegative $x$ and  $-\infty$, otherwise. Therefore $I$ meets the Assumptions \ref{fourthass} and \ref{thirdass}.
Also $\Gamma$ can be quickly computed.
$$
\Gamma(\lambda)= \sup_{x \in \R}[ x\frac{\lambda^2}2-1+x-x\log x]= e^{\lambda^2/2}-1.
$$
Therefore the critical $\lambda$, where the supremum in the definition of $J$ is attained satisfies $y= \lambda e^{\lambda^2/2}$.  It seems pretty difficult to compute the value of $J(y)$, however.

Interestingly, there is another way to obtain the rate function in this case. Consider the proof of Theorem \ref{randsum-nongauss} again. Observe that the only point in the proof, where we use $\alpha< \frac 12$ there, is when we bound the tail probabilities
$$\P\biggl(\sum_{i=1}^{\mu_k^\b} X_i \ge t \mu_k^{\a +\frac 12}\biggr)$$ by $e^{-(\frac {t^2} 2 -\vep) \mu_k^\gamma}$, i.e. we employ the moderate deviations bound for a fixed number of summands. However, if the $X_i$ are themselves Gaussian random variables such a bound is true {\textit{for all}} $\alpha>0$. We can therefore employ the techniques used in the proof of Theorem \ref{randsum-nongauss} to the situation where $\alpha=\frac 12$, which implies $\beta=\gamma=1$. This is exactly the situation described in this example. We then obtain that with $(\nu_k)_k$ again chosen as a Poisson random variable with parameter $k$ and $(X_i)_i$ independent standard Gaussian random variables, the sequence of random sums $(\frac 1 k \sum_{i=1}^{\nu_k} X_i)_k$ obeys an LDP with speed $k$ and rate function.
$$
J(y)=\inf\{y^2/2s+1-s+s\log s, s \in \R^+\}.
$$
Unfortunately, it is a common issue in large deviations theory to evaluate such rate functions explicitly.

The latter approach can be generalized. To this end one would assume that \eqref{finmomgen} holds for all $\lambda$ which entails that for deterministic summation indices $k$ the sequence $\bigl( \frac{1}{k} \sum_{i=1}^k X_i\bigr)_k$ obeys an LDP with speed $k$ and rate function
$$
K(x)= \sup_{\lambda \in \R} \bigl[ \lambda x - \log \E\bigl(\exp(\lambda X_1)\bigr) \bigr].
$$
Now assume that also the sequence of summation variables $(\nu_k/\mu_k)_k$ obeys an LDP with speed $\mu_k$ and some rate function $I$.  Following the approach described in the proof of Theorem \ref{randsum-nongauss} we then obtain an LDP for the sequence of random sums $\bigl( \frac 1 {\mu_k} \sum_{i=1}^{\nu_k} X_i \bigr)_k$ with speed $\mu_k$ (as expected) and rate function
$$
J(y)=\inf\left\{s K\left(\frac y s\right)+I(s), s \in \R^+\right\}.
$$
\end{example}

%\providecommand{\MRhref}[2]{%
%  \href{http://www.ams.org/mathscinet-getitem?mr=#1}{#2}
%}
%\providecommand{\href}[2]{#2}t
%\begin{thebibliography}{10}
%\end{thebibliography}

%\bibliographystyle{amsplain}
%\bibliography{1.03.2017}
\newcommand{\SortNoop}[1]{}\def\cprime{$'$} \def\cprime{$'$}
  \def\polhk#1{\setbox0=\hbox{#1}{\ooalign{\hidewidth
  \lower1.5ex\hbox{`}\hidewidth\crcr\unhbox0}}} \def\cprime{$'$}
\providecommand{\bysame}{\leavevmode\hbox to3em{\hrulefill}\thinspace}
\providecommand{\MR}{\relax\ifhmode\unskip\space\fi MR }
% \MRhref is called by the amsart/book/proc definition of \MR.
\providecommand{\MRhref}[2]{%
  \href{http://www.ams.org/mathscinet-getitem?mr=#1}{#2}
}
\providecommand{\href}[2]{#2}

\end{document}